%fundam@mimuw.edu.pl : submission id

\documentclass{fundam}
\usepackage{eufrak}
\usepackage{euler}
\usepackage{latexsym}
\usepackage{amsmath}
\issue{97~(2009)}
\usepackage{amssymb}
\usepackage[inline]{asymptote}
\usepackage{proof}
\usepackage{tabularx}
\usepackage{url}

\newcommand{\ml}{\mathfrak{L}}
\begin{document}
\setcounter{page}{177}

\title{Algebraic Semantics of Similarity-Based Bitten Rough Set Theory}

\author{\textbf{A. Mani}\thanks{I would like to the thank the referee for useful remarks and the reference to \cite{SW3}}\\
Member, Calcutta Mathematical Society\\
9/1B, Jatin Bagchi Road\\
Kolkata(Calcutta)-700029, India\\
\texttt{Email: $a.mani@member.ams.org$}\\
Homepage: \url{http://www.logicamani.co.cc}}

\maketitle

\runninghead{A. Mani}{Algebraic Semantics of Bitten Rough Sets}

\begin{abstract}
We develop two algebraic semantics for bitten rough set theory (\cite{SW}) over similarity spaces and their abstract granular versions. Connections with choice based generalized rough semantics developed in \cite{AM69} by the present author and general cover based rough set theories are also considered.
\end{abstract}

\begin{keywords}
Rough Set Theory over Tolerance Approximation Spaces, Bitten Rough Semantics, Granular Rough Semantics, $\lambda$-Rough Algebras, Discernibility in Similarity Spaces 
\end{keywords}

\section{Introduction}

In classical rough set theory, information systems with equivalence relations (or approximation spaces) are studied. The data tables used have crisp symbolic values (see \cite{ZPB} for example). More general variants with other types of relations and values have also been extensively studied in the literature (see \cite{KM}, \cite{DO}, \cite{ZP5}, \cite{AM24}, \cite{SS1} for example). The methods and concepts, including those of definability, approximation, granular representability of approximations, neighbourhoods and others, need many modifications for application in these situations. We focus on similarity spaces or tolerance approximation spaces in this research paper. Tolerance approximation spaces are also of interest in representation of inexact and incomplete knowledge (see \cite{ZP5}, \cite{SW3}).

In classical rough set theory, the negative region of a set is the lower approximation of the complement of the set. This region is disjoint from the upper approximation of the set in question. An analogous property fails to hold in tolerance approximation spaces (TAS). To deal with this a different semantic approach (to TAS) involving modified upper approximations is proposed in \cite{SW}. These modified upper approximations are formed from upper approximations by 'biting off' a part of it to form 'bitten upper approximations'. The new approximations turn out to be disjoint from the negative region of the subset and also possess better properties.

More specifically, the upper approximation of a subset of a TAS formed by a set of granules (with respect to those on the subset) is reduced by the deletion of the negative region of the subset to form the 'bitten upper approximation' of the subset. This ensures that the new upper approximation is disjoint from the negative region of the subset. In the formation of approximations explicit constraints are not imposed on the possible type of granules. The authors also pose the problem of developing an algebraic semantics for the approach.

In the present paper, we develop two different algebraic semantics for the same. 
The first of these actually captures reasoning within the power set of the set of possible order-compatible partitions of the set of roughly equivalent objects. We also prove suitable representation theorems in the third section. The main result of \cite{BZ} is required in the proof of the same theorem. 

Between formalizing the interaction of roughly equivalent objects and formalizing the approximations in a classicalist setting, we prefer the former or a dialectical version thereof (\cite{AM699}). But whether this 'rough equivalence' is to be an equivalence relation or a congruence or something else, is an open question. We do not abandon the viewpoint that it should at least be an equivalence. Interestingly the known algebraic approaches (like \cite{CG98}) aimed at just the approximations and derived operations in a classicalist setting (as opposed to the domain of roughly equal objects) cannot be directly adapted (at least) to the present context. 

Given that it may not be possible to have suitable abstract representation theorems from partial algebras over the set of roughly equivalent objects in the context, we use choice functions in a crucial (though implicit) way to introduce \emph{simplified algebra of the bitten granular semantics} \textsf{SGBA}. These are developed up to the level of some solvable/open problems. Here the motivation is not to integrate choice functions with rough set theory as in \cite{AM69}, but to build an easier semantics. At the same time the semantics is also motivated by possible connections with choice inclusive similarity based rough set theory.

In the following section we mention some of the essential notions and outline the bitten approach. The first of the algebraic semantics for the bitten approach is developed in the third section. Some related problems are also posed in the same. In the fourth section, a partial algebraic semantics over a relatively easier semantic domain (with respect to the previous approach) is developed.
These semantics are also applicable to particular cases of other general rough set theories including the cover based ones. This is indicated in the fifth section.  

\section{Background}

By a \emph{Tolerance Approximation Space} (TAS), we mean a pair of the form $S\,=\,\left\langle \underline{S},\,T\right\rangle$, with $\underline{S}$ being a set and $T$ a tolerance relation over it - these are also known as similarity or tolerance spaces. Some references for extension of classical rough set theory to TAS are \cite{CG98}, \cite{IM}, \cite{SW3}, \cite{CC}, \cite{AM69} and \cite{KM}. These theories differ in the types of granules, process of definition of approximations, semantics and on computational aspects. We mention some important aspects of rough set theory over TAS before proceeding with the \emph{bitten proposal} in \cite{SW}.

An approach (\cite{KM}) has been to define a new equivalence $\theta_{0}$ on  $S$ via $(x,\,y)\,\in\,\theta_{0}$ if and only if $dom_{T}(x)\,=\,dom_{T}(y)$  with  $dom_{T}(z)\,=\,\cap\{[x]_{T}\,:\,z\,\in\,[x]_{T}\}$. This is essentially an unduly cautious 'clear perspective' approach.

In ~\cite{IM}, a theory of generalized rough sets based on covers of subsets of a given set $\underline{S}$ is considered. Let ${S}$ be a set and $\mathcal{K}\,=\,\{K_{i}\}_{1}^{n}\,:n\,<\,\infty$ be a collection of subsets of it. We will abbreviate subsets of natural numbers of the form $\{1, 2, \ldots , n\}$ by $\mathbf{N}(n)$. If $X\subseteq  S$, then consider the sets (with $K_{0}\,=\,\emptyset$, $K_{n+1}\,=\,S$ for convenience):  
\begin{enumerate} \renewcommand\labelenumi{\theenumi}
  \renewcommand{\theenumi}{(\roman{enumi})}
\item {$X^{l1}\,=\,\bigcup\{K_{i}\,:\,K_{i}\subseteq  X,\,\,i\,\in \,\{0,\,1,\,...,n\}  \}$} 
\item {$X^{l2}\,=\,\bigcup\{\cap_{i\,\in \,I}(S\,\setminus K_{i})\,:\,\cap_{i\,\in \,I}(S\,\setminus\, K_{i})\subseteq X,\,\,I\subseteq \mathbf{N}(n+1) \} $}
\item {$X^{u1}\,=\,\bigcap\{\cup_{i\,\in \,I}{K_{i}}\,:\,X,\subseteq \cup_{i\,\in \,I}\,K_{i},\,\,I\subseteq \mathbf{N}(n+1) \} $}
\item {$X^{u2}\,=\,\bigcap\{S\,\setminus\,{K_{i}}\,:\,X\subseteq S\,\setminus\,K_{i},\,\,i\,\in \,\{0,\,...,n\}\}$}
\end{enumerate}

The pair $(X^{l1},\,X^{u1})$ is called a $AU$-\emph{rough set} by union, while $(X^{l2},\,X^{u2})$ a $AI$-\emph{rough set} by intersection (in the present author's notation \cite{AM24}). In the notation of ~\cite{IM} these are $\left( \mathcal{F}_{*}^{\cup}(X),\,\mathcal{F}_{\cup}^{*}(X)\right)$ and $\left( \mathcal{F}_{*}^{\cap}(X),\,\mathcal{F}_{\cap}^{*}(X)\right)$ respectively. We will also refer to the pair $\left\langle S,\,\mathcal{K} \right\rangle $ as a \textsf{AUAI}-\emph{approximation system}. 

In the TAS context, let the granules be $[x]_{T}\,=\,\{y\,;\,(x,\,y)\,\in\,T\}$ for each element $x$, and $\mathcal{K}$ be the collection of such sets.  Then $l1$ and $u1$ approximations of a set $A$ should be given by (i) and (iii) above respectively. Further $A^{l}\,=\,\bigcup\{[x]_{T}\,;\,[x]_{T}\,\subseteq\,A\}\,=\,A^{l1}$ and $A^{u}\,=\,\bigcup\{[x]_{T}\,;\,[x]_{T}\,\cap\,A\,\neq\,\emptyset ,\,x\,\in\,A\}\,\neq\,A^{u1}.$  Note that the generalized cover approach does not prescribe any specific type of granules. 

In \cite{CG98}, the following improved approximations are defined: 
\[A^{l*}\,=\,\{ x\,;\,(\exists{y})\,(x,\,y)\,\in\,T,\,[y]_{T}\,\subseteq\,A\}\]
 \[A^{u*}\,=\,\{x\,;\,(\forall {y})\,((x,\,y)\,\in\,T\,\longrightarrow\,[y]_{T}\,\cap\,A\,\neq\,\emptyset)\} \]

\begin{proposition}
For any subset $A$, $A^{l}\subseteq\,A^{l*}\,\subseteq\,A\,\subseteq\,A^{u*}\,\subseteq\,A^{u}$ 
\end{proposition}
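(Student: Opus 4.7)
The statement is a chain of four inclusions, and each follows by unfolding the definitions and using the fact that $T$ is a tolerance (reflexive and symmetric). My plan is to establish them in order, pointing out which property of $T$ each one uses.

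\textbf{Plan.} For $A^{l}\subseteq A^{l*}$, let $x\in A^{l}$. Then $x\in [z]_{T}$ for some $z$ with $[z]_{T}\subseteq A$. Symmetry of $T$ gives $(x,z)\in T$, so taking $y=z$ as the existential witness shows $x\in A^{l*}$. For $A^{l*}\subseteq A$, if $x\in A^{l*}$, pick the witness $y$ with $(x,y)\in T$ and $[y]_{T}\subseteq A$; symmetry yields $x\in [y]_{T}$, whence $x\in A$. For $A\subseteq A^{u*}$, fix $x\in A$ and an arbitrary $y$ with $(x,y)\in T$; by symmetry $x\in [y]_{T}$, so $x\in [y]_{T}\cap A$, proving the universal clause defining $A^{u*}$.

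For the last inclusion $A^{u*}\subseteq A^{u}$, let $x\in A^{u*}$. Reflexivity gives $(x,x)\in T$, so the defining universal statement applied at $y=x$ yields $[x]_{T}\cap A\neq\emptyset$. Choose any $z\in [x]_{T}\cap A$; then $z\in A$ and, by symmetry, $x\in [z]_{T}$. Since $z\in A$ and $z\in [z]_{T}\cap A$ by reflexivity, $[z]_{T}$ is one of the sets whose union defines $A^{u}$, and it contains $x$; hence $x\in A^{u}$.

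\textbf{Main obstacle.} All four inclusions are essentially bookkeeping, but the mild subtlety is in the final step: the upper approximation $A^{u}$ is indexed only by granules $[w]_{T}$ with $w\in A$, while $x\in A^{u*}$ gives no guarantee that $x$ itself lies in $A$. The trick is to not try to place $x$ in $[x]_{T}$ under the union but rather to transfer to a granule $[z]_{T}$ centered at a point $z\in A$ that is tolerance-related to $x$; the existence of such a $z$ is precisely what reflexivity (applied to the $A^{u*}$-condition at $y=x$) produces, and symmetry then puts $x$ inside $[z]_{T}$.
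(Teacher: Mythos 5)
Your proof is correct. The paper itself states this proposition without proof (it is imported from the reference in which the approximations $A^{l*}$ and $A^{u*}$ are introduced), and your unfolding of the definitions — using symmetry of $T$ for the first three inclusions and, for the final one, reflexivity at $y=x$ to produce a point $z\in [x]_{T}\cap A$ whose granule $[z]_{T}$ both contains $x$ and is indexed in the union defining $A^{u}$ — is exactly the intended argument; your remark about why one cannot simply use $[x]_{T}$ in the last step is the one genuine subtlety and you handle it correctly.
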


Using this approximation, we can define a Brouwerian orthocomplementation on $\wp(S)$, via $A^{\#}\,=\,\{x\,\in\,S;\,(\forall y\,\in\,A) (x,\,y)\,\notin\,T\}$. These approximations and possible variants may be seen as a way of \emph{distinguishing between objects on a heuristic}. The BZ and Quasi-BZ -algebraic approach (\cite{CC}) make use of a \emph{preclusivity} relation $P$, which is defined via $(a,b)\,\in\,P$ if and only if $(a,\,b)\,\notin\,T$. Semantically the derived rough operators of lower and upper approximation are generated by the preclusivity operator and the complementation relation on the power set of the approximation space or on a collection of sets under suitable constraints in a more abstract setting.  For any $H\,\subseteq\,\underline{S}$, the Brouwerian orthocomplement is $H^{\sharp}\,=\,\{x\,:\,(\forall y \in\,H)\,(x,\,y)\,\in\,P\}$. Then the operators defined by $L_{\sharp}(H)\,=\,H^{c \sharp \sharp c}$ and $M_{\sharp}(H)\,=\,H^{\sharp \sharp}$ behave like lower and upper approximation operators on $\wp(S)$ and are proper generalizations of the corresponding notions in the approximation space context. 

Semantically the BZ-algebra and variants do not capture all the possible ways of arriving at concepts of discernibility over similarity spaces. While the quasi-BZ lattice does not encompass a paradigm shift relative the BZ-algebra, the BZMV variants are designed to capture fuzzy aspects. A major problem with this approach is that the intended semantic domain is uniformly classicalist and at the object level too.

In subjective terms reducts are minimal sets of attributes that preserve the quality of classification. An important problem is in getting good scalable algorithms for the computation of the different types of reducts (or supersets that are close to them) (see ~\cite{KC,BNNSW}). These depend on the concept of granules used. 

Some other notions that we will use are stated below:
\begin{definition}
A double Heyting algebra $L\,=\,\{L,\,\wedge,\,\vee ,\,\rightarrow,\,\ominus,\,0,\,1\}$ is an algebra satisfying:
\begin{itemize}
\item {$L\,=\,\{L,\,\wedge,\,\vee ,\,\rightarrow,\,0,\,1\}$ is a complete atomic Heyting algebra}
\item {$x\,\ominus\,x\,=\,0$; $x\,\vee\,(x\,\ominus\,y)\,=\,x$; $(x\,\ominus\,y)\,\vee,y\,=\,x\,\vee\,y$}
\item {$(x\,\vee\,y)\,\ominus\,z\,=\,(x\,\ominus\,z)\,\vee\,(y\,\ominus\,z)$}
\item {$z\,\ominus\,(x\,\wedge\,y)\,=\,(z\,\ominus\,x)\,\vee\,(z\,\ominus\,y) $}
\end{itemize}
\end{definition}

\begin{definition}
By a \emph{choice function} $\chi$ on a set $S$, we shall mean a function $\chi\,:\wp(S)\,\longmapsto\,S$, which satisfies all of the following:
\begin{itemize}
\item {$(\forall x\,\in\,S)\,\chi (\{x\})\,=\,x$}
\item {$(\forall A\,\in\,\wp(S))\,\chi (A)\,\in\,A$}
\end{itemize}
\end{definition} 

\begin{definition}
Let $P\,=\,\left\langle \underline{P},\,<\,\right\rangle$ be a partially ordered set and if $A$ is any subset of $P$, let its lower and upper cone be $L(A)\,=\,\{x\,;\,(\forall a\,\in\,A)\,x\,\leq\,a\}$ and $U(A)\,=\,\{x\,;\,(\forall a\,\in\,A)\,a\,\leq\,x\}$ respectively. A function $\lambda\,:\,\wp(P)\,\mapsto\,P$ will be said to be \emph{lattice-coherent} with $<$ if and only if 
the condition $a\,\leq\,b$ then $\lambda (L(a,\,b)\,=\,a)$ and $\lambda (U(a,\,b))\,=\,b$.
\end{definition}

$\lambda$-lattices were considered as a generalization of lattices in \cite{SVA}. In the partially ordered set $P$ above  If $\lambda$ is a lattice coherent operation, let $a\,\cdot\,b\,=\,\lambda(L(a,\,b))$ and $a\,+\,b\,=\,\lambda(U(a,\,b))$, then the algebra $Q\,=\,\left\langle \underline{P},\,\cdot,\,+\right\rangle$ is said to be a $\lambda$-\emph{lattice}. It can be shown that if a partially ordered set has a lattice-coherent operation then it can be endowed with a $\lambda$-lattice structure and conversely. 

\begin{theorem}
In a $\lambda$-lattice $Q\,=\,\left\langle \underline{P},\,\cdot,\,+\right\rangle$ all of the following hold:
\begin{itemize}
\item {$+$ is an idempotent and commutative operation }
\item {$\cdot$ is an idempotent and commutative operation}
\item {$a\,\cdot\,(a\,+\,b)\,=\,a$}
\item {$a\,+\,(a\,\cdot\,b)\,=\,a$}
\item {$a\,\cdot\,((a\,\cdot\,b)\,\cdot\,c)\,=\,(a\,\cdot\,b)\,\cdot\,c $}
\item {$a\,+\,((a\,+\,b)\,+\,c)\,=\,(a\,+\,b)\,+\,c $}
\end{itemize} 
\end{theorem}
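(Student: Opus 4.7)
The plan is to derive each of the six identities by repeatedly applying the one axiom furnished by lattice-coherence: if $a\,\leq\,b$ then $\lambda(L(\{a,b\}))\,=\,a$ and $\lambda(U(\{a,b\}))\,=\,b$. Everything will reduce to finding, for each identity, a pair of elements that is comparable, so that the axiom can be invoked directly.

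I would start with the two easy clauses. \emph{Idempotency:} applying lattice-coherence with $a\,=\,b$ (legitimate by reflexivity of $\leq$) yields $a\,\cdot\,a\,=\,\lambda(L(\{a\}))\,=\,a$ and $a\,+\,a\,=\,\lambda(U(\{a\}))\,=\,a$. \emph{Commutativity:} trivial, since $L(\{a,b\})\,=\,L(\{b,a\})$ and $U(\{a,b\})\,=\,U(\{b,a\})$ as sets, so $\lambda$ returns the same element in either order.

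For absorption and partial associativity I need the \emph{bound observation}: $\lambda(L(\{a,b\}))\,\in\,L(\{a,b\})$ and $\lambda(U(\{a,b\}))\,\in\,U(\{a,b\})$. This is the natural reading of \emph{lattice-coherent} (and is forced by the behaviour on comparable pairs, where $\lambda$ returns precisely the maximum, resp.\ minimum, element of the principal down-, resp.\ up-, set). With it in hand, $a\,\cdot\,b\,\leq\,a$ and $a\,\leq\,a\,+\,b$; \emph{absorption} follows by applying lattice-coherence to the comparable pair $\{a,\,a+b\}$: $a\,\cdot\,(a\,+\,b)\,=\,\lambda(L(\{a,\,a+b\}))\,=\,a$, and dually $a\,+\,(a\,\cdot\,b)\,=\,a$. \emph{Partial associativity:} set $d\,=\,(a\,\cdot\,b)\,\cdot\,c$. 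By the bound observation, $d\,\leq\,a\,\cdot\,b\,\leq\,a$, so transitivity gives $d\,\leq\,a$. Lattice-coherence applied to $\{d,\,a\}$ gives $\lambda(L(\{d,\,a\}))\,=\,d$, i.e.\ (after using commutativity of $\cdot$) $a\,\cdot\,d\,=\,d$, which is the desired identity $a\,\cdot\,((a\,\cdot\,b)\,\cdot\,c)\,=\,(a\,\cdot\,b)\,\cdot\,c$. The dual argument, with $U$ in place of $L$ and reversed inequalities, establishes $a\,+\,((a\,+\,b)\,+\,c)\,=\,(a\,+\,b)\,+\,c$.

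The only genuine obstacle is the bound observation itself, since the stated definition pins $\lambda$ down only on the principal down/up-sets $L(\{a\})$ and $U(\{a\})$ and leaves its behaviour on antichain-sets like $L(\{a,b\})$ (with $a,\,b$ incomparable) formally undetermined; without the mild strengthening that $\lambda$ always returns an element of the set it is applied to, the absorption step cannot be closed, since $a\,\leq\,a+b$ would not be available. Once the observation is granted (as is tacitly intended, and as matches the usage in \cite{SVA}), all six clauses are uniform one-line consequences of lattice-coherence applied to a comparable pair manufactured from the bound observation.
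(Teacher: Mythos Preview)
The paper does not supply a proof of this theorem; it is stated as background material attributed to \cite{SVA}, so there is no ``paper's own proof'' to compare against. Your argument is the standard one for $\lambda$-lattices and is correct: each identity reduces to applying the lattice-coherence condition to a comparable pair, and the comparable pair is manufactured from the fact that $a\cdot b$ is a lower bound and $a+b$ an upper bound of $\{a,b\}$.

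You are also right to flag the definitional issue. The paper's wording of \emph{lattice-coherent} literally constrains $\lambda$ only on the cones $L(\{a,b\})$, $U(\{a,b\})$ when $a\leq b$, and says nothing about incomparable pairs; taken at face value this does not yield $a\cdot b\leq a$ or $a\leq a+b$, and without those inequalities absorption and partial associativity cannot be derived. The intended reading (consistent with \cite{SVA} and with the paper's own later use of $\lambda$ as a \emph{choice function} in the \textsf{SGBA} section) is that $\lambda(A)\in A$ for every $A$ in its domain, in particular $\lambda(L(\{a,b\}))\in L(\{a,b\})$ and $\lambda(U(\{a,b\}))\in U(\{a,b\})$. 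Once that is made explicit your proof goes through cleanly.
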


We will use two kinds of equalities in a partial algebra (of a single sort) $P\,=\,\left\langle \underline{P},\,f_{1},\,\ldots\,f_{n}, \nu \right\rangle $, $f_{i}$s being partial or total function symbols and $\nu$ being an interpretation of these on the set $\underline{P}$. Strictly speaking, the interpreted partial function should be written as $f^{\underline{P}}_{i}$, but we 
will drop the superscript for simplicity. The basic theory of partial algebras can be found in \cite{BU} for example.

For two terms $s$ and $t$, the \emph{strong weak equality} is defined as follows: $t(x)\,\stackrel{\omega^{*}}{=}\,s(x)$ if and only if 
\[(\forall x\,\in\,dom(t)\,=\,dom(s))\,\,t(x)\,=\,s(x).\] 

In contrast, the weak equality is defined via $t(x)\,\stackrel{\omega}{=}\,s(x)$ if and only if 
\[(\forall x\,\in\,dom(t)\cap\,dom(s))\,\,t(x)\,=\,s(x).\] 

\subsection{Bitten Approach}
In this section we recapitulate the essential \emph{bitten} proposal of \cite{SW} with some modifications. Actually the authors introduce \emph{bited} upper approximations in a study on tolerance spaces. However in \cite{SW3}, 'bitten approximations' are also considered. We will uniformly refer to these and approach by the adjective \emph{bitten}. The theory apparently lays emphasis on desired mereological properties at the cost of representation.

Let $Gr(S)\,\subseteq\,\wp(S)$ be the collection of granules for a TAS defined by some conditions including $\bigcup Gr(S)\,=\,S$. A subset $X$ is \emph{granularly definable} if and only if $\exists \mathcal{B}\,\subseteq\,Gr(S)\,X\,=\,\bigcup\mathcal{B}$. The collection of granularly definable sets shall be denoted by $Def_{Gr}(S)$. The lower and upper approximations of $X$ is defined via $Gr_{*}(X)\,=\,\bigcup\{A\,:\,A\,\subseteq\,X,\,A\,\in\,Gr(S) \} $ and 
$Gr^{*}(X)\,=\,\bigcup\{A\,:\,A\,\cap\,X\,\neq\,\emptyset,\,A\,\in\,Gr(S) \}$. The positive and negative region, defined by $POS_{Gr}(X)\,=\,Gr_{*}(X)$ and $NEG_{Gr}(X)\,=\,Gr_{*}(X^{c})$ respectively, are granularly definable. But in this scheme of things $Gr^{*}(X)\,\cap\,NEG_{Gr}(X)\,\neq\,\emptyset$ is possible. To avoid this, a concept of \emph{bitten upper approximation} is defined via $Gr^{*}_{b}(X)\,=\,Gr^{*}(X)\,\setminus\,NEG_{Gr}(X)$. Relative this the boundary is given by \[BN_{Gr}(X)\,=\,Gr^{*}_{b}(X)\,\setminus\,Gr_{*}(X)\,=\,S\,\setminus\,(POS_{Gr}(X)\,\cup\,NEG_{Gr}(X)).\]

$Gr(S)$ may be taken to be the set of $T$-relateds or the set of blocks of $T$ or something else. For example, if $Gr(S)$ is the set of all sets of the form $T_{x}$ ($T_{x}\,=\,\{y\,;\,(x,\,y)\,\in\,T,\,y\,\in\,S\}$), then $S\,=\,\bigcup\,Gr(S)$ and of course $Gr(S)\,\subseteq\,\wp(S)$. Given the latter two properties, the upper and lower approximations of a subset $X$ are then given by : \[Gr_{*}(X)\,=\,\bigcup\,\{Y\,\in\,Gr(S);\,Y\,\subseteq\,X\},\;\;\;Gr^{*}(S)\,=\,\bigcup\,\{Y\,\in\,Gr(S);\,Y\,\cap,X\,\neq\,\emptyset\}.\] The bitten upper approximation is simply $Gr_{b}^{*}(X)\,=\,Gr^{*}(X)\,\setminus\,Gr_{*}(X)$. If they are related to a specific tolerance, then we will call the tuple $\left\langle S,\,Gr(S),\,T,\,Gr_{*},\,Gr_{b}^{*}\right\rangle $ a \emph{bitten approximation system} (\textbf{BAS}). 

The properties of the approximations are as follows:

\begin{center}
\begin{tabularx}{450pt}{|X|X|}
\hline\hline
  \textbf{$l1$-Property}  &  \textbf{$u2$-Property}  \\
\hline
 $1a.) \;\,Gr_{*}(X)\,\subseteq\,X$  &  $1b.) \;\,X\,\subseteq\,Gr_{b}^{*}(X) $\\
\hline
$2a.) \;\,(X\,\subseteq\,Y\,\longrightarrow\,Gr_{*}(X)\,\subseteq\,Gr_{*}(Y))$  &  $2b.) \;\,(X\,\subseteq\,Y\,\longrightarrow\,Gr^{*}_{b}(X)\,\subseteq\,Gr^{*}_{b}(Y)) $\\
\hline
$3a.) \;\,Gr_{*}(\emptyset)\,=\,\emptyset$  &  $3b.) \;\,Gr^{*}_{b}(\emptyset)\,=\,\emptyset $\\
\hline
$4a.) \;\,Gr_{*}(S)\,=\,S$  &  $4b.) \;\,Gr^{*}_{b}(S)\,=\,S $\\
\hline
$5a.) \;\,Gr_{*}(Gr_{*}(X))\,=\,Gr_{*}(X)$  &  $5b.) \;\,Gr^{*}_{b}(Gr^{*}_{b}(X))\,=\,Gr^{*}_{b}(X)$\\
\hline
$6a.) \;\,Gr_{*}(X\,\cap\,Y)\,\subseteq\,Gr_{*}(X)\,\cap\,Gr_{*}(Y)$  &  $6b.) \;\,Gr^{*}_{b}(X\,\cap\,Y)\,\subseteq\,Gr^{*}_{b}(X)\,\cap\,Gr^{*}_{b}(Y)$\\
\hline
$7a.) \;\,Gr_{*}(X)\,\cup\,Gr_{*}(Y)\,\subseteq\,Gr_{*}(X\,\cup\,Y)$  &  $7b.) \;\,Gr^{*}_{b}(X)\,\cup\,Gr^{*}_{b}(Y)\,\subseteq\,Gr^{*}_{b}(X\,\cup\,Y)$\\
\hline
$8a.) \;\,Gr_{*}(X)\,\subseteq\,Gr^{*}_{b}(Gr_{*}(X))$  &  $8b.) \;\,Gr_{*}(Gr_{b}^{*}(X))\,\subseteq) \;\,Gr^{*}_{b}(X) $\\
\hline
$9a.) \;\,(Gr_{*}(X))^{c}\,=\,Gr^{*}_{b}(X^{c})$  &  $9b.) \;\,(Gr_{b}^{*}(X))^{c}\,=\,Gr_{*}(X^{c}) $\\
\hline 
$10A.) \;\,X \in Def_{Gr}(S)\,\longleftrightarrow\,X\,=\,Gr_{*}(X)$  & \\
\hline
$10B.) \;\,X \in Cr_{Gr}(S)\,\longleftrightarrow\,Gr_{*}(X)\,=\,Gr_{b}^{*}(X) $ & \\
\hline
$11A.) \;\,X,\,Y \in Def_{Gr}(S)\,\longrightarrow\,X\,\cup\,Y\,\in\,Def_{Gr}(S)$ & (Implies equality in $7a$) \\
\hline
$11B.) \;\,X,\,Y \in Cr_{Gr}(S)\,\longrightarrow\,X\,\cap\,Y,\,X\,\cup\,Y\,\in\,Cr_{Gr}(S)$ & (Implies equality in $6a,\,6b,\,7a,\,7b,\,8a,\,8b$) \\
\lasthline
\end{tabularx}
\end{center} 

In the same paper (\cite{SW}), the authors pose the problem of developing an algebraic semantics for the approach.

In a TAS $S$, a \emph{block} $B\,\subseteq\, S$ is a maximal set that satisfies $B^{2}\,\subseteq\,T$. In \cite{SW3}, these are termed \emph{classes}, while the former is the standard terminology in universal algebra. If $\mathcal{H}_{T}$ is the collection of all blocks of $T$, then let $\mathcal{A}_{T}\,=\,\{\cap F\,:\, F\,\subseteq\,\mathcal{H}_{T}\}$. Taking this large collection as the set of granules, the authors define the the lower and bitten upper approximation of a $X\,\subseteq\, S$ as $X^{l}\,=\,\bigcup\{A:\,A\,\subseteq\,X,\,A\,\in\,\mathcal{A}_{T}\}$ and  $X^{u}_{b}\,=\,\bigcup\{A:\,A\,\cap\,X\,\neq\,\emptyset ,\, A\,\in\,\mathcal{A}_{T}\}\,\setminus (X^{c})^{l}$. On the set of definable objects $\Delta (S)$, let 
\begin{itemize}
\item{$X\,\rightarrow\,Y\,=\,\bigcup \{A\,\in\,\mathcal{A}_{T};\,X\,\cap\,A\,\subseteq Y\}$} 
\item {$X\,\ominus Y\,=\,\bigcap \{B\,\in\,\mathcal{A}_{T};\,X\,\subseteq\,Y\,\cup\,A\}$.}
\end{itemize}
Then the following theorem provides a topological algebraic semantics (\cite{SW3}):
\begin{theorem}\label{gra}
$\left\langle \Delta (S),\,\cap,\,\cup,\,\rightarrow,\,\ominus,\,\emptyset,\,S\right\rangle $ is a complete atomic double Heyting algebra. 
\end{theorem}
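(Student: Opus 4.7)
The plan is to recognise $(\Delta(S),\,\cap,\,\cup)$ as a complete atomic distributive sublattice of $\wp(S)$, and then to verify that $\rightarrow$ and $\ominus$ are the relative and dual pseudocomplements via the two characterising adjunctions, whence all the double Heyting axioms follow formally. Closure of $\Delta(S)$ under arbitrary unions is immediate from the definition; closure under arbitrary intersections follows because $\mathcal{A}_{T}$ itself is closed under arbitrary intersections (as $\bigcap_{i}\bigcap F_{i} = \bigcap(\bigcup_{i} F_{i})$), combined with the distributive identity $\bigcap_{i}\bigcup_{j\in J_{i}} A_{ij} = \bigcup_{f\in\prod_{i} J_{i}}\bigcap_{i} A_{i,f(i)}$, which rewrites an arbitrary intersection of $\Delta(S)$-elements as a union of $\mathcal{A}_{T}$-elements. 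Since $\emptyset,\,S \in \mathcal{A}_{T}\subseteq \Delta(S)$, this yields a complete bounded distributive sublattice of $\wp(S)$. For atomicity I set $\alpha_{x} = \bigcap\{A\in\mathcal{A}_{T}\,:\,x\in A\}\in\mathcal{A}_{T}$ for each $x\in S$; any non-empty $X = \bigcup\mathcal{B}\in\Delta(S)$ containing $x$ has $\alpha_{x}\subseteq B\subseteq X$ for some $B\in\mathcal{B}$, so $X = \bigcup_{x\in X}\alpha_{x}$ and the minimal $\alpha_{x}$ below $X$ supply the required atoms.

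Next I verify the Heyting adjunction $X\cap Z\subseteq Y\Longleftrightarrow Z\subseteq X\rightarrow Y$: for $(\Leftarrow)$ distribute $X\cap(X\rightarrow Y) = \bigcup\{X\cap A\,:\,A\in\mathcal{A}_{T},\,X\cap A\subseteq Y\}\subseteq Y$; for $(\Rightarrow)$ decompose $Z = \bigcup\mathcal{B}$ and observe that each $A\in\mathcal{B}$ satisfies $X\cap A\subseteq X\cap Z\subseteq Y$, hence belongs to the union defining $X\rightarrow Y$. The dual co-Heyting adjunction $X\ominus Y\subseteq Z\Longleftrightarrow X\subseteq Y\cup Z$ is handled symmetrically: $X\ominus Y$ itself lies in $\mathcal{A}_{T}\subseteq\Delta(S)$ by closure under intersection, and the identity $Y\cup\bigcap_{i} A_{i} = \bigcap_{i}(Y\cup A_{i})$ yields $X\subseteq Y\cup(X\ominus Y)$ directly from the defining condition on the $A_{i}$.

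Once both adjunctions are in place, the complete atomic Heyting algebra part is standard, and the remaining co-Heyting axioms ($X\ominus X = \emptyset$, $X\cup(X\ominus Y) = X$, $(X\ominus Y)\cup Y = X\cup Y$, and the two distributivities) reduce to routine adjunction calculations; for example each side of $(X\cup Y)\ominus Z = (X\ominus Z)\cup(Y\ominus Z)$ is characterised as the smallest $W\in\Delta(S)$ with $X\cup Y\subseteq Z\cup W$. The principal obstacle is the $(\Leftarrow)$ direction of the co-Heyting adjunction: given $Z\in\Delta(S)$ with $X\subseteq Y\cup Z$ and a point $w\notin Z$, one must exhibit some $A\in\mathcal{A}_{T}$ with $X\subseteq Y\cup A$ but $w\notin A$, so that $w\notin X\ominus Y$. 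Here the atomic decomposition $Z = \bigcup_{x\in Z}\alpha_{x}$ together with the intersection-closure of $\mathcal{A}_{T}$ must be used in tandem to show that the $\mathcal{A}_{T}$-restricted intersection in the definition of $\ominus$ is still small enough to serve as the dual pseudocomplement relative to the full lattice $\Delta(S)$.
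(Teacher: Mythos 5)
First, a remark on the comparison you asked for: the paper contains no proof of this statement at all. Theorem~\ref{gra} is imported verbatim from \cite{SW3} as background material, so there is no argument of the author's against which to measure yours; your proposal has to stand on its own. On the purely lattice-theoretic side it does: closure of $\Delta(S)$ under arbitrary unions is trivial, your observation that $\mathcal{A}_{T}$ is closed under arbitrary intersections (via $\bigcap_{i}\bigcap F_{i}=\bigcap\bigl(\bigcup_{i}F_{i}\bigr)$) combined with complete distributivity of $\wp(S)$ correctly gives closure under arbitrary intersections, and your verification of the Heyting adjunction $X\cap Z\subseteq Y\Leftrightarrow Z\subseteq X\rightarrow Y$ is complete in both directions. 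The problem is the step you yourself label ``the principal obstacle'': you describe what would have to be shown for the co-Heyting adjunction (for $Z\in\Delta(S)$ with $X\subseteq Y\cup Z$ and $w\notin Z$, produce $A\in\mathcal{A}_{T}$ with $X\subseteq Y\cup A$ and $w\notin A$) but you do not show it, and in fact it is false if $\ominus$ is read literally as an intersection over $\mathcal{A}_{T}$. Take the paper's own illustrative tolerance on $\{x_{1},x_{2},x_{3},x_{4}\}$ generated by $(x_{1},x_{2})$ and $(x_{2},x_{3})$: the blocks are $\{x_{1},x_{2}\},\{x_{2},x_{3}\},\{x_{4}\}$, and the only member of $\mathcal{A}_{T}$ containing both $x_{1}$ and $x_{3}$ is $S$. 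Hence for $X=\{x_{1},x_{2},x_{3}\}$ and $Y=\{x_{2}\}$ one gets $X\ominus Y=S$, while the dual pseudocomplement of $Y$ relative to $X$ in $\Delta(S)$ is $\{x_{1},x_{2},x_{3}\}$; the axiom $X\cup(X\ominus Y)=X$ fails. No combination of atomic decomposition and intersection-closure can repair this. The displayed definition of $\ominus$ has a mismatched bound variable and must be read with the infimum taken over all of $\Delta(S)$ (or at least over $Def_{Gr}(S)$); once that is done, your own identity $Y\cup\bigcap_{i}W_{i}=\bigcap_{i}(Y\cup W_{i})$ together with closure of $\Delta(S)$ under arbitrary intersections gives both directions of the adjunction immediately, and no appeal to $\mathcal{A}_{T}$ is needed.

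There is a second, smaller gap in the atomicity argument. Your decomposition $X=\bigcup_{x\in X}\alpha_{x}$ with $\alpha_{x}=\bigcap\{A\in\mathcal{A}_{T}:x\in A\}$ is correct, but ``the minimal $\alpha_{x}$ below $X$ supply the required atoms'' presupposes that minimal ones exist, which is a descending chain condition you never establish. For finite $S$ (the usual rough-set setting) this is harmless, but for infinite $S$ it can fail: on $S=\{a_{n}\}_{n\geq 1}\cup\{b_{n}\}_{n\geq 1}$ with blocks $B_{n}=\{a_{n},a_{n+1},\ldots\}\cup\{b_{n}\}$ one finds $\alpha_{a_{2}}\supsetneq\alpha_{a_{3}}\supsetneq\cdots$ and no atom of $\Delta(S)$ below $\alpha_{a_{2}}$. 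You should either state a finiteness hypothesis explicitly or prove the relevant chain condition for the class of tolerances under consideration.
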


However extensions of the theorem to other types of granules are not known. No abstract representation theorem is also proved in the particular situation. 

\section{Semantics for Bitten Rough Set Theory}

The concept of granules to be used in the theory is essentially kept open. Many types of granules may not permit nice represention theory. Despite this, our first semantics over roughly equivalent objects does well. This is due to the higher order approach used.

\begin{definition}
If $S$ is a TAS, then over $\wp (S)$ let \[A\,\sim\,B\;\mathrm{iff}\;Gr_{*}(A)\,=\,Gr_{*}(B)\;\mathrm{and}\;Gr^{*}_{b}(A)\,=\,Gr^{*}_{b}(B) \]  
\end{definition}

The following proposition and theorem basically say \emph{the quotient structure (or the set of roughly equivalent objects) has very little structure} with respect to desirable properties of a partial algebra. They are clearly deficient from the rough perspective as we do not have proper conjunction and disjunction operations.  
But 'biting' may actually make the partial operations total in many contexts. 

\begin{proposition}\label{sec3}
$\sim$ is an equivalence on the power set $\wp (S)$. Moreover the following operations and relations on $\wp (S)|\sim$ are well-defined:
\begin{itemize}
\item {$L([A])\,=\,[Gr_{*}(A)]$}
\item {$\neg[A]\,=\,[A^{c}]$ if defined}
\item {$\blacklozenge ([A])\,=\,[Gr^{*}_{b}(A)]$}
\item {$[A]\,\leq\,[B]$ if and only if, for any $A\,\in [A]$ and $B\,\in\,[B]$ $Gr_{*}(A)\,\subseteq\,Gr_{*}(B)$ and $Gr^{*}_{b}(A)\,\subseteq\,Gr^{*}_{b}(B)$.}
\item {$[A]\,\Cap\,[B]\,=\,[C]$ if and only if $[C]$ is the infimum of $[A]$ and $[B]$ w.r.t $\leq$. It shall be taken to be undefined in other cases. }
\item {$[A]\,\Cup\,[B]\,=\,[C]$ if and only if $[C]$ is the supremum of $[A]$ and $[B]$ w.r.t $\leq$. It shall be taken to be undefined in other cases. }
\end{itemize}

Moreover $\leq$ is a partial order on $\wp (S)|\sim$ that is partially compatible with $L$ on the crisp elements. 
\end{proposition}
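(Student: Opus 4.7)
The plan is to verify each item in the proposition by direct computation, leaning on the tabulated properties of $Gr_{*}$ and $Gr^{*}_{b}$. The proof is essentially routine, but some care is needed in three places: the duality between lower and bitten upper approximations needed to handle $\neg$, the argument that $\leq$ is representative-independent, and the uniqueness argument required to see $\Cap$ and $\Cup$ as well-defined partial operations.

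First, since $\sim$ is defined as the conjunction of two equalities between set-valued functions, reflexivity, symmetry and transitivity are inherited from $=$. Well-definedness of $L$ and $\blacklozenge$ is then immediate from the definition of $\sim$: if $A\,\sim\,B$ then $Gr_{*}(A)\,=\,Gr_{*}(B)$ and $Gr^{*}_{b}(A)\,=\,Gr^{*}_{b}(B)$, so $[Gr_{*}(A)]\,=\,[Gr_{*}(B)]$ and $[Gr^{*}_{b}(A)]\,=\,[Gr^{*}_{b}(B)]$. For $\neg$, the plan is to invoke properties $9a$ and $9b$ of the table: applying $9a$ twice gives $Gr^{*}_{b}(A^{c})\,=\,(Gr_{*}(A))^{c}\,=\,(Gr_{*}(B))^{c}\,=\,Gr^{*}_{b}(B^{c})$, and $9b$ similarly yields $Gr_{*}(A^{c})\,=\,Gr_{*}(B^{c})$. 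Hence $A\,\sim\,B$ forces $A^{c}\,\sim\,B^{c}$, so $\neg[A]\,=\,[A^{c}]$ is consistent on classes whenever assigned.

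Next I would check that $\leq$ is well-defined: the defining condition refers only to the $\sim$-invariant quantities $Gr_{*}$ and $Gr^{*}_{b}$, so the universal quantification over representatives is automatic. Reflexivity and transitivity of $\leq$ are inherited from $\subseteq$, while antisymmetry is precisely the definition of $\sim$, since $[A]\,\leq\,[B]$ and $[B]\,\leq\,[A]$ together force $Gr_{*}(A)\,=\,Gr_{*}(B)$ and $Gr^{*}_{b}(A)\,=\,Gr^{*}_{b}(B)$. Once $\leq$ is known to be a partial order, the uniqueness of infima and suprema in any poset gives $\Cap$ and $\Cup$ as well-defined partial operations, with the domain of definition being the pairs for which the relevant extremum exists in $\wp(S)|\sim$.

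Finally, for partial compatibility of $L$ with $\leq$ on the crisp elements, I would use property $10B$ together with $1a$ and $1b$ to see that a crisp representative $X$ satisfies $Gr_{*}(X)\,=\,X\,=\,Gr^{*}_{b}(X)$, so that $L([X])\,=\,[X]$ on crisp classes; monotonicity of $L$ then follows from $2a$ applied to $Gr_{*}(A)\,\subseteq\,Gr_{*}(B)$, together with idempotence $5a$ and the monotonicity $2b$ of $Gr^{*}_{b}$. The only non-mechanical step is the use of $9a$ and $9b$ for $\neg$, since both coordinates of $\sim$ must be preserved and each requires a different half of the duality; that is where I expect the only real care to be needed.
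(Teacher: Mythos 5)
Your proposal is correct and follows essentially the same route as the paper's own (much terser) proof: direct verification from the tabulated properties, with the only genuinely non-trivial step being the use of the duality $9a$/$9b$ to show that $A\,\sim\,B$ forces $A^{c}\,\sim\,B^{c}$, which is exactly the step the paper spells out. Your treatment of $\leq$, of the poset-uniqueness argument for $\Cap$ and $\Cup$, and of crispness via $10B$ with $1a$/$1b$ fills in the parts the paper dismisses with ``we can verify the rest in a similar way,'' and does so correctly.
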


\begin{proof} 
\begin{itemize}
\item {For any $A\,\in\,\wp(S)$ and any $B\,\in\,[A]$, $Gr_{*}(Gr_{*}(A)\,=\,Gr_{*}(A)\,=\,Gr_{*}(B)$ and $Gr^{*}_{b}(Gr_{*}(A))\,=\,Gr^{*}_{b}(Gr_{*}(B))$. This proves that $L$ is  well-defined. }

\item {For any $A\,\in\,\wp(S)$ and any $B,\,E\in\,[A]$, $(Gr_{*}(B))^{c}\,=\,Gr_{b}^{*}(B^{c})\,=\,(Gr_{*}(E))^{c}\,=\,Gr_{b}^{*}(E^{c})$ and $(Gr_{b}^{*}(B))^{c}\,=\,Gr_{*}(B^{c)}\,=\,(Gr_{b}^{*}(E))^{c}\,=\,Gr_{*}(E^{c)}$.
}
\end{itemize}
We can verify the rest in a similar way.  
\end{proof}

\begin{proposition}
In the above context, \[([A]\,\leq\,[B]\,\longrightarrow\,\neg [B]\,\leq\,\neg [A])\] 
\end{proposition}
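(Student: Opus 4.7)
The plan is to exploit the De Morgan-like dualities given by items $9a$ and $9b$ of the table — namely $(Gr_{*}(X))^{c} = Gr^{*}_{b}(X^{c})$ and $(Gr^{*}_{b}(X))^{c} = Gr_{*}(X^{c})$ — which convert every approximation of a complement into the complement of an approximation. Under these identities the implication collapses to the set-theoretic fact that taking complements reverses inclusions.

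Concretely, I would first note that $\neg$ is well-defined on $\wp(S)|\sim$; this is already the content of the second bullet in the proof of Proposition \ref{sec3} and is itself a direct consequence of $9a$ and $9b$. With that in hand, I unpack the conclusion $\neg [B]\,\leq\,\neg [A]$ as $[B^{c}]\,\leq\,[A^{c}]$, which by definition of $\leq$ amounts to the pair of inclusions $Gr_{*}(B^{c})\,\subseteq\,Gr_{*}(A^{c})$ and $Gr^{*}_{b}(B^{c})\,\subseteq\,Gr^{*}_{b}(A^{c})$. Applying $9b$ to the first and $9a$ to the second, these rewrite as $(Gr^{*}_{b}(B))^{c}\,\subseteq\,(Gr^{*}_{b}(A))^{c}$ and $(Gr_{*}(B))^{c}\,\subseteq\,(Gr_{*}(A))^{c}$, respectively. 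Taking complements on both sides of each reverses the inclusion, yielding $Gr^{*}_{b}(A)\,\subseteq\,Gr^{*}_{b}(B)$ and $Gr_{*}(A)\,\subseteq\,Gr_{*}(B)$ — precisely the two clauses of the hypothesis $[A]\,\leq\,[B]$.

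There is no genuine obstacle: the whole argument is a purely formal manipulation resting on the duality pair $9a$/$9b$, so the real content of the statement is that duality rather than anything special about $\leq$ or $\neg$ themselves. The only mild care needed is that $\neg$ makes sense on equivalence classes at all, and that is already secured by the preceding proposition; once it is invoked, the proof is a one-line consequence of contraposition.
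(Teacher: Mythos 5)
Your argument is correct: the paper states this proposition without proof, and your derivation via the duality pair $9a$/$9b$ (which the paper itself invokes in the proof of the preceding proposition to show $\neg$ is well-defined) is exactly the intended one-line contraposition. Both clauses of $\neg[B]\leq\neg[A]$ rewrite, as you show, into complements of the two clauses of $[A]\leq[B]$, so nothing is missing.
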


\begin{theorem}
All of the following hold in $\wp (S)|\sim$ (we assume that unary operators bind more strongly than binary ones):
\begin{enumerate}
\item {$(\blacklozenge x\,\Cup\,\blacklozenge y\,=\,a,\,\blacklozenge (x\,\Cup\,y)\,=\,b\,\longrightarrow\,\blacklozenge x\,\Cup\,\blacklozenge y\,\leq \,\blacklozenge (x\,\Cup\,y))$}
\item {$x\,\Cap\,\blacklozenge x\,=\,x$}
\item {$\blacklozenge \blacklozenge x\,=\,\blacklozenge x$}
\item {$(\blacklozenge x\,\Cap\,\blacklozenge y\,=\,a,\,\blacklozenge (x\,\Cap\,y)\,=\,b\,\longrightarrow\,\blacklozenge (x\,\Cap\,y)\,\leq\,\blacklozenge x\,\Cap\,\blacklozenge y) $}
\item {$(L x\,\Cup\,L y\,=\,a,\,L (x\,\Cup\,y)\,=\,b\,\longrightarrow\,L x\,\Cup\,L y\,\leq\,L (x\,\Cup\,y))$}
\item {$L x\,\Cap\,\blacklozenge L x\,=\,L x$}
\item {$\blacklozenge x\,\Cup\,L \blacklozenge  x\,=\,\blacklozenge x$}
\item {$\neg \blacklozenge x\,=\,L \neg x$}
\item {$\neg L x\,=\,\blacklozenge \neg x$}
\end{enumerate}
\end{theorem}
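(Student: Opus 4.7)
The plan is to reduce each of the nine items to a set-theoretic assertion about representatives, where it will follow from the $l1/u2$ properties tabulated above (items 1a--9b) combined with the well-definedness clauses of Proposition \ref{sec3}. Since $\Cap$, $\Cup$ and $\neg$ are a priori partial, the implications in (1), (4) and (5) are to be read conditionally: if either indicated equality in the hypothesis fails because the relevant supremum or infimum does not exist in $(\wp (S)|\sim,\,\leq)$, then the implication is vacuous, and otherwise one reasons inside that partial order. Note in passing that $\neg$ is in fact total: if $A\,\sim\,B$ then 9a gives $Gr^{*}_{b}(A^{c})\,=\,(Gr_{*}(A))^{c}\,=\,(Gr_{*}(B))^{c}\,=\,Gr^{*}_{b}(B^{c})$, and 9b dually yields $Gr_{*}(A^{c})\,=\,Gr_{*}(B^{c})$, so $A^{c}\,\sim\,B^{c}$.

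The first substantive step is to establish monotonicity of $L$ and $\blacklozenge$ on $\wp (S)|\sim$: if $[A]\,\leq\,[B]$ then $[LA]\,\leq\,[LB]$ and $[\blacklozenge A]\,\leq\,[\blacklozenge B]$. This drops out by feeding $Gr_{*}(A)\,\subseteq\,Gr_{*}(B)$ and $Gr^{*}_{b}(A)\,\subseteq\,Gr^{*}_{b}(B)$ into the set-level monotonicity properties 2a and 2b, and using the idempotencies 5a and 5b to simplify the iterated approximations that arise when unwinding $\leq$ on the image classes. With monotonicity in place, items (1), (4) and (5) become routine sup/inf arguments: for (1), under the assumption that both sides of the hypothesis exist, $x\,\Cup\,y$ dominates $x$ and $y$, hence by monotonicity of $\blacklozenge$ the element $\blacklozenge(x\,\Cup\,y)$ is an upper bound of $\{\blacklozenge x,\,\blacklozenge y\}$ and therefore lies above their supremum. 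Items (4) and (5) are the dual and analogous arguments. Item (3) is exactly 5b on a representative, and (8), (9) are direct pointwise rewrites of 9b and 9a respectively.

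The absorption identities (2), (6), (7) reduce to three $\leq$-inequalities on the quotient combined with the general fact that the infimum (respectively supremum) of comparable elements is the smaller (larger) one. Specifically, (2) uses $[A]\,\leq\,[\blacklozenge A]$, which follows from 1b: applying $Gr_{*}$ (via 2a) gives the first inclusion and 1b applied to $Gr^{*}_{b}(A)$ (combined with 5b) gives the second; (6) uses $[LA]\,\leq\,[\blacklozenge LA]$, which is essentially 8a together with 5a and 5b handling the composed terms; and (7) uses $[L\,\blacklozenge A]\,\leq\,[\blacklozenge A]$, from 8b combined with 2b and 5b. The main obstacle throughout is purely bookkeeping around the partial operations $\Cap$ and $\Cup$, in particular resisting the temptation to represent $x\,\Cup\,y$ or $x\,\Cap\,y$ by $[X\,\cup\,Y]$ or $[X\,\cap\,Y]$: these operations are defined from the order $\leq$ and need not be lifted pointwise from the Boolean operations on $\wp(S)$.
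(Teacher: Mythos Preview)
Your proposal is correct. The underlying content is the same as the paper's: every item is reduced to the tabulated properties 1a--9b applied to representatives, together with the well-definedness of $L$, $\blacklozenge$, $\neg$ on the quotient. The organisation differs slightly: the paper proceeds item by item, directly picking elements $C$, $E$ of the relevant classes and asserting the required inclusions between their approximations, whereas you first isolate the monotonicity of $L$ and $\blacklozenge$ with respect to $\leq$ and then dispatch (1), (4), (5) by an abstract supremum/infimum argument and (2), (6), (7) by the comparability-of-the-two-arguments observation. Your route makes the role of 2a, 2b, 5a, 5b and 8a, 8b more explicit (the paper's proof of (1), for instance, simply states the needed inclusions for $C$ and $E$ without naming the properties used), and your remark that $\neg$ is in fact total is a useful clarification of the ``if defined'' clause in Proposition~\ref{sec3}. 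Neither approach uses any idea the other does not; yours is just a cleaner packaging of the same verification.
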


\begin{proof}
\begin{enumerate}
\item {If $A\,\in\,x$ and $B\,\in\,y$, then $\blacklozenge x\,=\,[Gr^{*}_{b}(A)]$, $\blacklozenge y\,=\,[Gr^{*}_{b}(B)]$. Given the existence of the terms in the premise, we can assume that there exists $C\,\in\,[Gr^{*}_{b}(A)]\,\Cup\,[Gr^{*}_{b}(B)]$ and $E\,\in\,\blacklozenge (x\,\Cup\,y)$. $Gr_{*}(C)\,\subseteq\,Gr_{*}(E)$ and $Gr^{*}_{b}(C)\,\in\,Gr^{*}_{b}(E)$. So, given the existence of the terms in the premise, we have $\blacklozenge x\,\Cup\,\blacklozenge y\,\leq \,\blacklozenge (x\,\Cup\,y)$.}
\item {If $A\,\in\,x$, then $\blacklozenge x\,=\,[Gr^{*}_{b}(A)]$. As $A\,\subseteq\,Gr^{*}_{b}(A)$, so $x\,\Cap\,\blacklozenge x\,=\,x$.}
\item {If $A\,\in\,x$, then $\blacklozenge \blacklozenge x\,=\,\blacklozenge [Gr^{*}_{b}(A)]\,=\,[Gr^{*}_{b}(Gr^{*}_{b}(A))]\,=\,[Gr^{*}_{b}(A)] \,=\,\blacklozenge x$.}
\item {The proof of $(\blacklozenge x\,\Cap\,\blacklozenge y\,=\,a,\,\blacklozenge (x\,\Cap\,y)\,\longrightarrow\,\blacklozenge (x\,\Cap\,y)\,\leq\,\blacklozenge x\,\Cap\,\blacklozenge y)$ is similar to that of the first item.}
\item {Given the existence of the terms in the premise, if $A\,\in\,x$ and $B\,\in\,y$, then $Lx\,=\,L[A]\,=\,[Gr_{*}(A)]$ and $Ly\,=\,[Gr_{*}(B)]$. If $C\,\in\,Lx\,\Cup\,Ly$ and $E\,\in\,L(x\,\Cup\,y)$, then $Gr_{*}(C)\,\subseteq\,Gr_{*}(A)\,\cup\,Gr_{*}(B)$, $Gr_{*}(A)\,\cup\,Gr_{B}\,\subseteq\,Gr_{*}(E)$ and $Gr^{*}_{b}(C)\,\subseteq\,Gr^{*}_{b}(E)$. So, $L x\,\Cup\,L y\,\leq\,L (x\,\Cup\,y)$.}
\item {If $A\,\in\,x$, then $Lx\,=\,L[A]\,=\,[Gr_{*}(A)]$ and $\blacklozenge L x\,=\,[Gr^{*}_{b} Gr_{*}(A)]$. But  $Gr_{*}(A)\,\subseteq\,Gr^{*}_{b}Gr_{*}(A)$. So $L x\,\Cap\,\blacklozenge L x\,=\,L x$.}
\item {If $A\,\in\,x$, then $\blacklozenge x\,=\,\blacklozenge [A]\,=\,[Gr^{*}_{b}(A)]$ and $L \blacklozenge x\,=\,[Gr_{*} Gr^{*}_{b}(A)]$. But  $Gr_{*} Gr^{*}_{b}(A)\,\subseteq\,Gr^{*}_{b}(A)$. So, $\blacklozenge x\,\Cup\,L \blacklozenge  x\,=\,\blacklozenge x$.}
\item {If $A\,\in\,x$, then $\neg \blacklozenge x\,=\,\neg \blacklozenge [A]\,=\,\neg [Gr^{*}_{b}(A)]$. $\neg [Gr^{*}_{b}(A)]\,=\,[(Gr^{*}_{b}(A))^{c}],=\,[Gr_{*}(A^{c})]\,=\,L neg [A]$. So, $\neg \blacklozenge x\,=\,L \neg x$.}
\item {If $A\,\in\,x$, then $\neg L x\,=\,\neg [Gr_{*}(A)]\,=\,[(Gr_{*}(A))^{c}]$. But $[(Gr_{*}(A))^{c}]\,=\,[Gr^{*}_{b}(A^{c})]$. This yields $\neg L x\,=\,\blacklozenge \neg x$.}
\end{enumerate} 
\end{proof}

A semantics using the partial algebra over the associated quotient may be difficult because of axiomatisability issues. So we use a higher order approach, taking care not to introduce extraneous properties. Eventually the constructed algebra ends up with three partial orders. In the following construction the use of a modified concept of filters simplifies the eventual representation theorem.

If $\wp (S)|\sim\,=\,K$, then let $K^{*}\,=\,\{f\,:\,f\,:\,K\,\mapsto I\,\,\mathrm{is}\,\,\mathrm{isotone} \}$, $I$ being the the totally ordered two element set $\{0,\,1\}$ under $0\,<\,1$. For any $A\,\subset\,K^{*}$, a subset $F$ is an $A$-\emph{ideal} if and only if \[F\,=\,\bigcap_{x\,\in\,A}\,x^{-1}\{0\}.\] Dually  
$F$ is an $A$-\emph{filter} if and only if \[F\,=\,\bigcap_{x\,\in\,A}\,x^{-1}\{1\}.\]

All $A$-ideals are order ideals (w.r.t the induced order on $K^{*}$), but the converse need not hold. $A\,\subset\,K^{*}$ is said to be \emph{full} if $\forall \,p\,\nleq\,q\,\exists\, x\,\in\,A\, x(p)\,=\,1,\,x(q)\,=\,0$. $A$ is said to be \emph{separating} if for any disjoint ideal $I$ and filter $F$, there exists a $x\,\in\,A$ such that $x_{|I}\,=\,0$ and $x_{|F}\,=\,1$ 

\begin{lemma}
If $A$ is a separating subset of $K^{*}$ and $(\forall p,\,q\,\in\,K)(q\,\nleq\,p\,\longrightarrow\,p \downarrow_{A}\,\cap\,q \uparrow_{A}\,=\,\emptyset)$, then $A$ is full.  
\end{lemma}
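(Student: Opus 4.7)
The plan is to reduce fullness to the separating hypothesis by using the $\downarrow_{A}/\uparrow_{A}$ disjointness clause to supply an explicit disjoint $A$-ideal/$A$-filter pair. I read $p \downarrow_{A}$ as the principal $A$-ideal $\bigcap\{x^{-1}\{0\} : x \in A,\,x(p)=0\}$ and $q \uparrow_{A}$ as the principal $A$-filter $\bigcap\{x^{-1}\{1\} : x \in A,\,x(q)=1\}$; both are instances of the paper's defining intersection (with indexing family $\{x \in A : x(p)=0\}$, respectively $\{x \in A : x(q)=1\}$), and by construction $p \in p \downarrow_{A}$ and $q \in q \uparrow_{A}$.

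Now fix $p, q \in K$ with $p \nleq q$; the goal is to produce $x \in A$ with $x(p) = 1$ and $x(q) = 0$. Since the disjointness hypothesis is universally quantified in both variables, I apply it with $q$ in the role of the hypothesis's $p$ and $p$ in the role of its $q$ to obtain $q \downarrow_{A} \,\cap\, p \uparrow_{A} = \emptyset$. Because $q \downarrow_{A}$ is an $A$-ideal containing $q$ and $p \uparrow_{A}$ is an $A$-filter containing $p$, and these two sets are disjoint, the separating hypothesis delivers some $x \in A$ with $x_{|q \downarrow_{A}} = 0$ and $x_{|p \uparrow_{A}} = 1$. Evaluating at $q$ and at $p$ respectively yields $x(q) = 0$ and $x(p) = 1$, which is precisely the witness required by fullness.

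The only real obstacle is notational: one must be willing to read $p \downarrow_{A}$ and $q \uparrow_{A}$ as \emph{principal} $A$-ideal and $A$-filter (so that they truly qualify as an ideal and filter in the sense of the separating axiom, rather than as a generic order-theoretic downset/upset). Once that reading is adopted, the proof reduces to a symmetrization of the hypothesis followed by a single invocation of separating, with no residual calculation; the separating hypothesis does the essential work of promoting the set-theoretic disjointness of the principal ideal/filter pair into an honest element of $A$ that discriminates the two endpoints.
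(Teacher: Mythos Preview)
The paper states this lemma without proof; the surrounding representation results are simply attributed to \cite{BZ}, so there is no in-paper argument to compare against. Your argument is the natural one and is correct under the reading of $p\downarrow_{A}$ and $q\uparrow_{A}$ that you make explicit: once $q\downarrow_{A}$ and $p\uparrow_{A}$ are recognised as a disjoint $A$-ideal/$A$-filter pair containing $q$ and $p$ respectively, a single application of the separating hypothesis produces the required $x\in A$ with $x(p)=1$, $x(q)=0$.

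The one point worth flagging is exactly the one you already isolate. The paper's literal definition makes ``the $A$-ideal'' a single set $\bigcap_{x\in A}x^{-1}\{0\}$, while the separating clause speaks of ``any disjoint ideal $I$ and filter $F$'' without the $A$-prefix. Your proof needs the separating hypothesis to apply to sets of the form $\bigcap_{x\in B}x^{-1}\{0\}$ with $B\subseteq A$, which is the only reading under which the second hypothesis of the lemma is not redundant (if arbitrary order ideals and filters were allowed, the principal pair $\downarrow q$, $\uparrow p$ would already be disjoint whenever $p\nleq q$, and separating alone would give fullness). So your interpretive choice is not merely convenient but forced by the internal logic of the statement; it would strengthen the write-up to say this explicitly rather than leaving it as a purely notational caveat.
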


If $p\,\in\,K$, then let $\mathcal{UP}(p)\,=\,\{x\,:\,x(p)\,=\,1\}$ and $\mathcal{LO}(p)\,=\,\{x\,:\,x(p)\,=\,0\}$, then we can define two closure operators $C_{1},\,C_{2}$ via \[C_{1}\,=\,clos\{\mathcal{UP}(p)\}_{p\,\in\,K} \]
(a $C_{1}$-closed set is an intersection of elements of $\{\mathcal{UP}(p)\}_{p\,\in\,K}$) and \[C_{2}\,=\,clos\{\mathcal{LO}(p)\}_{p\,\in\,K}\]

Note that elements of $\mathcal{UP}(p)\}_{p\,\in\,K}$ are in fact $C_{1}O_{2}$-sets (that is sets that are open w.r.t the second closure system and closed with respect to the first). The set of such sets on a system $(S,\,C_{1},\,C_{2})$, will be denoted by $C_{1}O_{2}(S,\,C_{1},\,C_{2})$. 
The associated closure operators will be denoted by $cl_{1}$ and $cl_{2}$ respectively.

On any subset $A\,\subseteq\,K^{*}$, we can define closure operators via $C_{iA}(X)\,=\,C_{i}\,\cap\,A$, with associated closure systems $\mathcal{UP}_{A}(p)\,=\,\mathcal{UP}(p)\,\cap\,A$ and $\mathcal{LO}_{A}(p)\,=\,\mathcal{LO}(p)\,\cap\,A$ respectively. It can be seen that, in the situation, $C_{1A}\,=\,clos \{\mathcal{UP}(p)\}_p\,\in\,P$ and $C_{2A}\,=\,clos \{\mathcal{LO}(p)\}_p\,\in\,P$.  

\begin{theorem}
If $A\,\subseteq\,K^{*}$ and $\sigma\,:\,K\,\mapsto\,C_{1}O_{2}(A,\,C_{1A},\,C_{2A})$ is a map defined by $\sigma (p)\,=\,\mathcal{UP}(p)$ then 
\begin{enumerate}
\item {$\sigma$ is isotone}
\item {If $A$ is full, then $\sigma$ is injective }
\item {If $A$ is separating, then $\sigma$ is surjective.}
\end{enumerate}
\end{theorem}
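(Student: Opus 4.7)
The plan is to handle the three claims in order of increasing difficulty, treating (1) and (2) as direct consequences of the definitions and reserving the substantive work for (3). For (1), fix $p \leq q$ in $K$ and an arbitrary $x \in \sigma(p) = \mathcal{UP}_{A}(p)$; since $x \in K^{*}$ is isotone, $x(p) = 1$ forces $x(q) = 1$, so $x \in \sigma(q)$ and hence $\sigma(p) \subseteq \sigma(q)$. For (2), I argue contrapositively: if $p \neq q$ then, as $\leq$ is a partial order, we may assume $p \not\leq q$, and fullness supplies $x \in A$ with $x(p) = 1$ and $x(q) = 0$, witnessing $\sigma(p) \neq \sigma(q)$.

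The main task is (3). Given $U \in C_{1}O_{2}(A, C_{1A}, C_{2A})$, the two closure conditions let me write
\[
U \;=\; \bigcap_{p \in P_{1}} \mathcal{UP}_{A}(p) \quad \text{and} \quad A \setminus U \;=\; \bigcap_{p \in P_{2}} \mathcal{LO}_{A}(p),
\]
for suitable $P_{1}, P_{2} \subseteq K$. Let $F_{U}$ be the upper set generated by $P_{1}$ in $K$ and $I_{U}$ the lower set generated by $P_{2}$; these play the role of a filter and an ideal in the order-theoretic sense. The key observation is that any $p \in F_{U} \cap I_{U}$ already satisfies $\sigma(p) = U$: picking $q \in P_{1}$ with $q \leq p$ and $r \in P_{2}$ with $p \leq r$, the isotonicity of each $x \in A$ yields $x(p) = 1$ whenever $x \in U$ (because $x(q) = 1$) and $x(p) = 0$ whenever $x \in A \setminus U$ (because $x(r) = 0$), so $\mathcal{UP}_{A}(p) = U$ on the nose.

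It therefore suffices to rule out $F_{U} \cap I_{U} = \emptyset$. Here separating is exactly the tool: if the two were disjoint, it would produce $x \in A$ with $x|_{I_{U}} = 0$ and $x|_{F_{U}} = 1$; but then $x(q) = 1$ for every $q \in P_{1} \subseteq F_{U}$ would place $x$ in $U$, while $x(r) = 0$ for every $r \in P_{2} \subseteq I_{U}$ would place $x$ in $A \setminus U$, an outright contradiction. Hence $F_{U} \cap I_{U} \neq \emptyset$, and surjectivity follows from the previous paragraph.

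I expect the principal obstacle to be the bookkeeping around the two interwoven closure systems rather than any genuinely deep step: one must confirm that the ideal and filter appearing in the separating hypothesis may legitimately be identified with the order-theoretic lower and upper closures of $P_{2}$ and $P_{1}$ in $K$, and that the simultaneous $C_{1A}$-closed / $C_{2A}$-open status of $U$ really does deliver the two dual representations used above. Once that setup is in place, the entire argument reduces to evaluating isotone functions at filter and ideal generators.
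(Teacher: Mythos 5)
The paper does not actually prove this theorem in-text: its ``proof'' is a single sentence deferring this and the following theorem to the representation result for arbitrary posets in \cite{BZ}. Your argument is therefore a reconstruction of the cited result rather than a parallel to an explicit derivation, and as a reconstruction it follows the standard route and is essentially sound. Parts (1) and (2) are the expected one-line verifications and are correct (for (2), note only that antisymmetry guarantees at least one of $p\nleq q$, $q\nleq p$, so the ``we may assume'' is legitimate by relabelling). For part (3), the core computation --- that any $p$ lying above some index of the $C_{1A}$-representation of $U$ and below some index of the $C_{2A}$-representation of $A\setminus U$ satisfies $\mathcal{UP}_{A}(p)=U$ exactly --- is correct, and invoking the separating property to force such a $p$ to exist is the right mechanism.

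The one genuine loose end is the treatment of the extreme cases $U=A$ and $U=\emptyset$, where the index set $P_{1}$ (respectively $P_{2}$) may be empty, so that $F_{U}$ (respectively $I_{U}$) is the empty up-set (down-set). Your application of ``separating'' to the disjoint pair then demands an $x\in A$ that is constant on all of $K$, and whether such an $x$ is available depends entirely on which reading of ``separating'' one adopts; under the reading that makes the paper's claim that $K^{*}\setminus\{0,1\}$ is full and separating true, it is not available, and the argument as written does not close there. The clean repair is to take the canonical maximal index sets $P_{1}=\{p: U\subseteq\mathcal{UP}_{A}(p)\}$ and $P_{2}=\{p: A\setminus U\subseteq\mathcal{LO}_{A}(p)\}$: these are automatically an up-set and a down-set (so no generated closure is needed), your identity $\sigma(p)=U$ holds for any $p$ in their intersection, and your contradiction from separation applies when they are disjoint, now with the boundary cases absorbed into the same computation. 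You do explicitly flag the remaining definitional ambiguity --- whether the ideal and filter in the separating condition are order-theoretic or the narrower $A$-ideals and $A$-filters of the paper --- which is exactly where the paper itself is underspecified, so this is a fair place to leave the bookkeeping.
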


\begin{proof}
This theorem and the following theorem are proved for an arbitrary partially ordered set $K$ in \cite{BZ}.    
\end{proof}

\begin{theorem}
If $A$ is a full and separating subset of $K^{*}$, then $K\,\cong\,C_{1}O_{2}(A,\,C_{1A},\,C_{2A})$. In particular, $K\,\cong\,C_{1}O_{2}(K^{*},\,C_{1},\,C_{2})$ (as $K^{*}$ is a full and separating set). Even $K^{*}\setminus\{0,1\}$ is a full and separating set.
\end{theorem}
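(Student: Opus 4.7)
The plan is to combine the preceding theorem with a short verification that fullness forces the inverse of $\sigma$ to be order-preserving, and then to exhibit explicit isotone witnesses showing that $K^{*}$ (and even $K^{*}\setminus\{0,1\}$) is full and separating.

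First, by the preceding theorem, $\sigma\colon K\to C_{1}O_{2}(A,C_{1A},C_{2A})$, $p\mapsto\mathcal{UP}(p)$, is isotone, injective (by fullness), and surjective (by separation), hence a bijection. To upgrade this to a poset isomorphism I verify that $\sigma(p)\subseteq\sigma(q)$ implies $p\leq q$. Arguing contrapositively, suppose $p\nleq q$; fullness of $A$ yields $x\in A$ with $x(p)=1$ and $x(q)=0$, so $x\in\mathcal{UP}(p)\setminus\mathcal{UP}(q)$, i.e.\ $\sigma(p)\not\subseteq\sigma(q)$. Thus $\sigma$ is an order isomorphism and the first assertion follows.

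Next, I verify that $K^{*}$ itself is full. Given $p\nleq q$ in $K$, define $f_{p}\colon K\to I$ by $f_{p}(r)=1$ if $p\leq r$ and $f_{p}(r)=0$ otherwise. Since $\uparrow p$ is an up-set in $K$, $f_{p}$ is isotone and hence lies in $K^{*}$; moreover $f_{p}(p)=1$ and $f_{p}(q)=0$ because $q\not\geq p$. For separation, given disjoint ideal $I$ and filter $F$, the characteristic function $\chi_{F}$ is isotone (since $F$ is up-closed), and satisfies $\chi_{F}|_{I}=0$ and $\chi_{F}|_{F}=1$. Invoking the first paragraph with $A=K^{*}$ then yields $K\cong C_{1}O_{2}(K^{*},C_{1},C_{2})$.

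For the refinement to $K^{*}\setminus\{0,1\}$, observe that the two witnesses just constructed are already non-constant in the substantive cases: $f_{p}(p)=1$ and $f_{p}(q)=0$ force $f_{p}\notin\{0,1\}$, and whenever $I$ and $F$ are both non-empty, $\chi_{F}$ takes both values. The degenerate cases (one of $I,F$ empty) are vacuous and can be handled by any fixed non-constant isotone function, which exists provided $|K|\geq 2$. The main conceptual step is the first: once one notices that fullness is precisely the converse-isotonicity condition for $\sigma^{-1}$, everything else reduces to standard characteristic-function constructions on up-sets, and the technical heavy lifting (isotonicity, injectivity, surjectivity of $\sigma$) has already been absorbed into the preceding theorem via \cite{BZ}.
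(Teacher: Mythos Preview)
Your proof is correct. The paper itself gives no independent argument here: the proof of the preceding theorem states that ``this theorem and the following theorem are proved for an arbitrary partially ordered set $K$ in \cite{BZ}'', so the paper simply defers both results to that reference. Your write-up supplies precisely the details one would expect to find there---the contrapositive argument that fullness makes $\sigma^{-1}$ order-preserving, and the characteristic-function witnesses $f_{p}=\chi_{\uparrow p}$ and $\chi_{F}$ for fullness and separation of $K^{*}$ (and of $K^{*}\setminus\{0,1\}$)---so your approach is consistent with, and more explicit than, the paper's citation.
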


We state the following for clarifying the connection with the more common way of using closure operators.
\begin{proposition}
$A\,\in\,C_{1}O_{2}(K^{*},\,C_{1},\,C_{2})$ if and only if $A\,\subseteq\,K^{*}$ and $(\exists B,\,E\,\subseteq\,K^{*})\,A\,=\,cl_{1}B,\,A\,=\,K^{*}\,\setminus\,cl_{2}(E)$.   
\end{proposition}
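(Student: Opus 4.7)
The plan is to prove the biconditional by unwinding the definitions of the two closure operators and the $C_1O_2$-class. Recall that by construction $cl_1$ (resp.\ $cl_2$) is the closure operator associated with the closure system generated by $\{\mathcal{UP}(p)\}_{p\in K}$ (resp.\ $\{\mathcal{LO}(p)\}_{p\in K}$), so a subset of $K^{*}$ is $C_1$-closed iff it equals $cl_1$ of something (equivalently, iff it is a fixed point of $cl_1$, by idempotence), and it is $C_2$-open iff its complement in $K^{*}$ is $C_2$-closed, i.e.\ iff it has the form $K^{*}\setminus cl_2(E)$ for some $E\subseteq K^{*}$. The statement of membership in $C_1O_2(K^{*},C_1,C_2)$ is precisely the conjunction of these two conditions.

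For the forward implication, I would assume $A\in C_1O_2(K^{*},C_1,C_2)$. Then $A$ is $C_1$-closed, so by idempotence of $cl_1$ we may take $B:=A$ and obtain $A=cl_1 B$. Simultaneously, $A$ is $C_2$-open, hence $K^{*}\setminus A$ is $C_2$-closed; taking $E:=K^{*}\setminus A$ and using idempotence of $cl_2$ yields $cl_2(E)=K^{*}\setminus A$, so $A=K^{*}\setminus cl_2(E)$.

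For the reverse implication, suppose $A=cl_1 B=K^{*}\setminus cl_2(E)$ for some $B,E\subseteq K^{*}$. Any set in the image of $cl_1$ is $C_1$-closed (since $cl_1(cl_1 B)=cl_1 B$), so $A$ is $C_1$-closed. Similarly, $cl_2(E)$ is $C_2$-closed, so its complement $A$ in $K^{*}$ is $C_2$-open. Together these give $A\in C_1O_2(K^{*},C_1,C_2)$.

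There is no genuine obstacle here: the statement is essentially a tautological restatement of the phrase ``closed with respect to $C_1$ and open with respect to $C_2$'' in terms of explicit witnesses $B$ and $E$. The only thing to be careful about is invoking idempotence of the two closure operators (which is built into the definition of the closure systems generated by $\{\mathcal{UP}(p)\}$ and $\{\mathcal{LO}(p)\}$) and the fact that ``open w.r.t.\ a closure system'' is by definition the complement of a closed set in the ambient space $K^{*}$, so the complementation is taken inside $K^{*}$ and not inside any smaller carrier.
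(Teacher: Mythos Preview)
Your proposal is correct; the paper states this proposition without proof (it is introduced merely ``for clarifying the connection with the more common way of using closure operators''), so there is no argument to compare against. Your direct unwinding of the definitions---using idempotence of $cl_1,cl_2$ and the convention that $C_2$-open means complement of $C_2$-closed in $K^{*}$---is exactly the intended reading.
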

  
$K^{*}$ can be interpreted as the set of partitions of the set of roughly equivalent elements into an upper and lower region subject to the new order being a coarsening of the original order. The important thing is that this restricted global object is compatible with the 'natural global' versions of the other operations and leads to a proper semantics. We show this in what follows.

\begin{definition}
On $K^{*}$, the following global operations (relative those on $K$) can be defined:
\begin{itemize}
\item {If $A\,\in\,C_{1}O_{2}(K^{*},\,C_{1},\,C_{2})$, then $\mathfrak{L}(A)\,=\,L(i(A))$, $i$ being the canonical identity map from $C_{1}O_{2}(K^{*},\,C_{1},\,C_{2})$ onto $\wp{K}$. }
\item {$A\,\vee\,B\,=\,cl_{1}(A\,\cup\,B)$ (if the RHS is also open with respect to the second closure system), $\cup$ being the union operation over $K^{*}$ }
\item {$A\,\wedge\,B\,=\,cl_{1}(A\,\cap\,B)$ (if the RHS is also open with respect to the second closure system), $\cap$ being the intersection operation over $K^{*}$ }
\item {If $A\,\in\,C_{1}O_{2}(K^{*},\,C_{1},\,C_{2})$ then $\diamondsuit A\,=\,\blacklozenge i(A)$ }
\item {If $A\,\in\,C_{1}O_{2}(K^{*},\,C_{1},\,C_{2})$ then $\sim A\,=\,\neg i(A)$}
\item {$cl_{1},\,cl_{2}$ can be taken as unary operators on $K^{*}$ }
\item {$1,\,\bot,\,\top,$ shall be $0$-ary operations with interpreted values corresponding to $K,\,\emptyset,\,K^{*}$ respectively } 
\item {If $A,\,B\,\in\,C_{1}O_{2}(K^{*},\,C_{1},\,C_{2})$ then $A\,\sqcap\,B,=\,i(A)\,\Cap\,i(B)$ }
\item {If $A,\,B\,\in\,C_{1}O_{2}(K^{*},\,C_{1},\,C_{2})$ then $A\,\sqcup\,B,=\,i(A)\,\Cup\,i(B)$}
\end{itemize}
\end{definition}
\begin{proposition}
A partial operation is well defined if it is either uniquely defined or not ambiguously defined. In this sense all of the operations and partial operations are well-defined. 
\end{proposition}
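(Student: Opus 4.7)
My plan is to partition the operations listed in the preceding definition into two groups and dispatch each group separately. The first group consists of the total operations: the $0$-ary constants $1, \bot, \top$; the unary operators $cl_1, cl_2$; and the ``lifted'' unary operators $\mathfrak{L}, \diamondsuit, \sim$ (which are defined as $L \circ i$, $\blacklozenge \circ i$, and $\neg \circ i$). The second group consists of the genuinely partial binary operations $\vee, \wedge, \sqcap, \sqcup$. Both groups rely on the observation that the underlying set-theoretic and quotient-level constructions produce a unique value wherever they produce a value at all.

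For the total unary lifts, I would appeal directly to Proposition \ref{sec3}: $L$ and $\blacklozenge$ are shown there to be well-defined functions on $K$, and $\neg$ is shown to be well-defined wherever it applies. Since the identification $i : C_1O_2(K^{*},C_1,C_2) \to \wp K$ is a single-valued canonical map, the compositions $L \circ i$, $\blacklozenge \circ i$, and $\neg \circ i$ are single-valued. The two closure operators $cl_1, cl_2$ are defined as intersections of prescribed families of subsets of $K^{*}$ and hence return a unique subset, and the three constants have named fixed interpretations. So every member of this group is uniquely defined in the sense of the proposition.

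For $\vee$ and $\wedge$, the prescriptions are $cl_1(A \cup B)$ and $cl_1(A \cap B)$; set-theoretic union, intersection, and the single-valued closure $cl_1$ each return exactly one set, so the compound expression has a unique value whenever the side-condition (that the result also be open with respect to the second closure system) is met. The role of that side-condition is purely to restrict the domain of the partial operation — it does not introduce ambiguity in the value. Thus $\vee$ and $\wedge$ are not ambiguously defined.

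The main obstacle, to be handled last, is $\sqcap$ and $\sqcup$, because these inherit their partiality from $\Cap$ and $\Cup$ on $K$, which were themselves specified as the infimum and supremum with respect to $\leq$ when these exist. Here I would argue that infima and suprema in a partially ordered set are unique when they exist, so that $i(A) \Cap i(B)$ and $i(A) \Cup i(B)$, when defined, take unique values, and hence so do $A \sqcap B$ and $A \sqcup B$. Combining the three cases yields the conclusion that every operation displayed in the preceding definition is either uniquely defined or is not ambiguously defined on its domain, as required.
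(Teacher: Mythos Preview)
Your proposal is correct and follows the same direct-verification approach as the paper, whose entire proof reads ``Most of the verification is direct.'' One minor slip: you list $\sim$ among the total operations, but $\neg$ on $K$ is itself only partial (Proposition~\ref{sec3} writes ``$\neg[A]=[A^{c}]$ if defined''), so $\sim=\neg\circ i$ is partial as well; your subsequent phrase ``well-defined wherever it applies'' already handles this, so the argument goes through unchanged.
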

\begin{proof}
Most of the verification is direct. 
\end{proof}

\begin{definition}
An algebra of the form  \[\mathfrak{W}\,=\,\left\langle\underline{\wp(K^{*})},\,\vee,\,\wedge,\,\sqcap,\,\sqcup,\,\cup,\,\cap,\,^{c},\,,cl_{1},\,cl_{2},\,\sim,\,\mathfrak{L},\,\diamondsuit,\,\bot,\,1,\,\top \right\rangle \] of type $(2,\,2,\,2,\,2,\,1,\,1,\,1,\,1,\,1,\,1,\,0,\,0,\,0)$ in which the operations are as in the above definition will be called a \emph{concrete bitten algebra}.  $\cup,\,\cap,\,^{c}$ are the union, intersection and complementation operations respectively on $\wp (K^{*})$. We use $\xi (x)$ as an abbreviation for $cl_{1}x\,=\,x,\,cl_{2} x^{c}\,=\,x^{c}$. Further $\xi (x,\,y,\,\ldots )$ shall mean $\xi (x)$, $\xi (y)$ and so on. If $S$ is the original TAS, then we will denote its associated concrete bitten algebra by $Bite(S)$.
\end{definition}
  
\begin{theorem}
A concrete Bitten Algebra $\mathfrak{W}$ satisfies all of the following:

\begin{enumerate}
\item {$\left\langle\underline{\wp(K^{*})},\,\cup,\,\cap,\,^{c},\,\bot,\,\top \right\rangle $ is a Boolean algebra. Note that after forming associated categories with the usual concept of morphisms, we can realize this through forgetful functors.}
\item {$x\,\vee\,y\,\stackrel{\omega^{*}}{=}\,y\,\vee\,x$}
\item {$x\,\vee\,(y\,\vee\,z)\stackrel{\omega}{=}\,(x\,\vee\,y)\,\vee\,z$}
\item {$(\xi (x)\,\longrightarrow\,x\,\vee\,x\,=\,cl_{1}(x))$ }
\item {$(x\,\vee\,y\,=\,z\,\longrightarrow\,cl_{1}z\,=\,z,\,cl_{2} z^{c}\,=\,z^{c})$}
\item {$(x\,\vee\,x\,=\,y\,\longrightarrow\,cl_{2}(x^{c})\,=\,x^{c},\,y\,=\,cl_{1}(x)\,=\,x\,\wedge\,x)$}
\item {$cl_{i}(x)\,\cap\,x\,=\,x $;  $cl_{i} cl_{i} (x)\,=\,cl_{i} (x)$;  $i\,=\,1,\,2$}
\item {$(x\,\cap\,y\,=\,x\,\longrightarrow\,cl_{i}(x)\,\cap\,cl_{i}(y)\,=\,cl_{i}(x))$;  $i\,=\,1,\,2$}
\item {$x\,\wedge\,y\,\stackrel{\omega^{*}}{=}\,y\,\wedge\,x$}
\item {$x\,\wedge\,(y\,\wedge\,z)\stackrel{\omega}{=}\,(x\,\wedge\,y)\,\vee\,z$}
\item {$(x\,\wedge\,x\,=\,y\,\longrightarrow\,cl_{2}(x^{c})\,=\,x^{c},\,y\,=\,cl_{1}(x))$}
\item {$(cl_{2}((cl_{1}x) ^{c})\,=\,(cl_{1}x)^{c}\,\longrightarrow\,x\,\wedge\,x\,=\,cl_{1}(x))$ }
\item {$(x\,\wedge\,y\,=\,z\,\longrightarrow\,cl_{1}z\,=\,z,\,cl_{2} z^{c}\,=\,z^{c})$}
\item {$((x\,\wedge\,y)\,\vee\,x\,=\,z\,\longrightarrow\,z\,=\,cl_{1}(x))$}
\item {$(x\,\wedge\,y)\,\vee\,x\,\stackrel{\omega^{*}}{=}\,x\,\wedge\,(y\,\vee\,x) $}
\item {$\ml \bot\,=\,\bot$;  $\ml 1\,=\,1$}
\item {$(cl_{1}x\,=\,x,\,cl_{2}(x^{c})\,=\,x^{c}\,\longrightarrow\,x\,\vee\,\ml x\,=\,x,\,\ml \ml x\,=\,\ml x)$}
\item {$(x\,\sqcap\,y\,=\,z\,\longrightarrow\,\xi(x,\,y,\,z))$}
\item {$ (x\,\sqcup\,y\,=\,z\,\longrightarrow\,\xi(x,\,y,\,z))$}
\item {$x\,\sqcap\,y\,\stackrel{\omega^{*}}{=}\,y\,\sqcap\,x$}
\item {$x\,\sqcap\,(y\,\sqcap\,z)\stackrel{\omega}{=}\,(x\,\sqcap\,y)\,\sqcap\,z$}
\item {$x\,\sqcup\,y\,\stackrel{\omega^{*}}{=}\,y\,\sqcup\,x$}
\item {$x\,\sqcup\,(y\,\sqcup\,z)\stackrel{\omega}{=}\,(x\,\sqcup\,y)\,\sqcup\,z$}
\item {$(\xi(x,\,y)\,\longrightarrow\,\diamondsuit (x\,\sqcup\,y)\,\cap\,(\diamondsuit x\,\sqcup\,\diamondsuit y)\,=\,\diamondsuit x\,\sqcup\,\diamondsuit y)$}
\item {$(\xi(x)\,\longrightarrow\,x\,\sqcap\,\diamondsuit x\,=\,x,\,\,\diamondsuit \diamondsuit x\,=\,\diamondsuit x)$}
\item {$(\xi(x,\,y)\,\longrightarrow\,\diamondsuit (x\,\sqcap\,y)\,\cap\,(\diamondsuit x\,\sqcap\,\diamondsuit y)\,=\,\diamondsuit (x\,\sqcup\, y))$}
\item {$(\xi(x,\,y)\,\longrightarrow\,\ml (x\,\sqcup\,y)\,\cap\,(\ml x\,\sqcup\,\ml y)\,=\,\ml x\,\sqcup\,\ml y)$}
\item {$(\xi (x)\,\longrightarrow\,\ml x\,\sqcap\,\diamondsuit \ml x\,=\,\ml x)$}
\item {$(\xi (x)\,\longrightarrow\,\diamondsuit x\,\sqcup\,\ml \diamondsuit  x\,=\,\diamondsuit x)$}
\item {$(\xi x\,\longrightarrow\,\sim \diamondsuit x\,=\,\ml \sim x)$}
\item {$(\xi x\,\longrightarrow\,\sim \ml x\,=\,\diamondsuit \sim x)$}
\item {$(x\,\sqcap\,y\,=\,x\,\longrightarrow\,x\,\sqcup\,y\,=\,y)$}
\end{enumerate}
\end{theorem}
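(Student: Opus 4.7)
The plan is to dispatch the 32 clauses by partitioning them into four groups whose verifications use different pieces of already-built machinery: (a) the pure Boolean part on $\wp(K^*)$, namely clause (1); (b) the clauses (2)--(15) about $\vee, \wedge, cl_1, cl_2$, which live entirely inside the closure structure $(K^*,C_1,C_2)$; (c) the clauses (16)--(17) and (28)--(31) involving the unary $\ml, \diamondsuit, \sim$, which lift via the isomorphism $K\cong C_1O_2(K^*,C_1,C_2)$; and (d) the clauses (18)--(27) and (32) involving the partial operations $\sqcap, \sqcup$, together with their interaction with $\diamondsuit$ and $\ml$.

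Clause (1) is immediate since $\cup,\cap,{}^c,\bot,\top$ are the ambient set-theoretic operations on $\wp(K^*)$. For group (b), the definitions give $x\vee y=cl_1(x\cup y)$ and $x\wedge y=cl_1(x\cap y)$, defined exactly when the output is also $C_2$-open; this is precisely what $\xi$ encodes. Commutativity clauses (2), (9), (20), (22) and the associativity clauses (3), (10), (21), (23) then reduce to the corresponding properties of $\cup$ and $\cap$, with $\stackrel{\omega^*}{=}$ appearing where the input wholly determines the output and $\stackrel{\omega}{=}$ where intermediate terms may fail to be defined. The ``output is closed'' clauses (5), (13), (19) follow directly from the defining formulas, while the idempotency and absorption clauses (4), (6)--(8), (11), (12), (14), (15) follow from idempotency, monotonicity, and extensivity of the $cl_i$, together with the standard identity $cl_i(cl_i(x)\cap cl_i(y))=cl_i(x\cap y)$ when both closures agree.

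For group (c), the representation theorem $K\cong C_1O_2(K^*,C_1,C_2)$ via $i$ and $\sigma(p)=\mathcal{UP}(p)$ gives definitional equalities $\ml A=L(i(A))$, $\diamondsuit A=\blacklozenge(i(A))$, $\sim\!A=\neg(i(A))$. Whenever the $\xi$ hypotheses guarantee that arguments lie in $C_1O_2(K^*,C_1,C_2)$, each of (16), (17), and (28)--(31) transports directly to the analogous identity from the preceding theorem about $\wp(S)|\sim$. For group (d), $A\sqcap B=i(A)\Cap i(B)$ and $A\sqcup B=i(A)\Cup i(B)$, so clauses (18)--(23) follow at once from well-definedness of $\Cap, \Cup$ on $K$; clauses (24)--(27) then transcribe the weak join/meet-preservation behaviour of $\blacklozenge$ and $L$ established in items (1), (4), (5) of the earlier theorem for $\wp(S)|\sim$; and (32) is the standard equivalence between order and the absorption-style characterisation of meet and join in a $\lambda$-lattice.

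The main obstacle, and the only place where real care is needed, is the bookkeeping between partial definedness and the two flavours of equality. In particular, the associativity clauses must be framed with $\stackrel{\omega}{=}$ precisely because one of the two nestings may fail to be defined, and one must argue that whenever both sides are defined they coincide. A clean way to organise this is to prove one preliminary lemma: if $\xi(x_1,\ldots,x_n)$ holds for the free variables of a term $t$ built from $\vee,\wedge,cl_1,cl_2$ and the lifted operations, then every closure-dependent subterm of $t$ is defined, so $\stackrel{\omega^*}{=}$ collapses to ordinary equality. With that lemma available, the remaining verifications become short computations using only the nine identities already proved for $L,\blacklozenge,\neg,\Cap,\Cup$ on $\wp(S)|\sim$.
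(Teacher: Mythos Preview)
Your overall strategy is the paper's: dispatch the Boolean and closure-theoretic clauses directly from the definitions of $cl_{1},cl_{2},\vee,\wedge$, and obtain the clauses about $\ml,\diamondsuit,\sim,\sqcap,\sqcup$ by transporting the nine identities of the earlier theorem on $\wp(S)|\sim$ through the isomorphism $K\cong C_{1}O_{2}(K^{*},C_{1},C_{2})$. The paper works item by item rather than in batches, but the mathematical content is the same.

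There is, however, a genuine gap in your organisation. The ``preliminary lemma'' you propose --- that $\xi$ on the free variables of a term forces every closure-dependent subterm to be defined --- is false, and in fact contradicts your own earlier (correct) remark about why associativity requires $\stackrel{\omega}{=}$. Even with $\xi(x),\xi(y),\xi(z)$, the intermediate $y\vee z=cl_{1}(y\cup z)$ need not be $C_{2}$-open, so $x\vee(y\vee z)$ can be undefined; the same phenomenon occurs for $\sqcap,\sqcup$, since $\Cap,\Cup$ are only \emph{partial} inf/sup on $K$. You therefore cannot collapse the definedness bookkeeping to one lemma; each clause must carry its own existence hypotheses, which is exactly what the paper does and what the distinction between $\stackrel{\omega^{*}}{=}$ and $\stackrel{\omega}{=}$ records. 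Two smaller points: the ``standard identity'' $cl_{i}(cl_{i}(x)\cap cl_{i}(y))=cl_{i}(x\cap y)$ you invoke is not valid for arbitrary closure operators and is not needed here (monotonicity and idempotence suffice for (7), (8), (14)); and your justification of (32) via $\lambda$-lattices is misplaced, since $\sqcap,\sqcup$ in this section are the \emph{partial} infimum and supremum on the poset $K$, not the choice-based total operations introduced later --- clause (32) follows simply because $i(x)\Cap i(y)=i(x)$ forces $i(x)\leq i(y)$, whence $i(x)\Cup i(y)=i(y)$.
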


\begin{proof}
\begin{enumerate}
\item {That $\left\langle\underline{\wp(K^{*})},\,\cup,\,\cap,\,^{c},\,\bot,\, \top \right\rangle$ is a Boolean algebra can be proved by part of Stone's representation theorem.}
\item {For proving $x\,\vee\,y\,\stackrel{\omega^{*}}{=}\,y\,\vee\,x$, if $x\,\vee\,y$ is defined, then $cl_{1}(x\,\cup\,y)$ is open with respect to $cl_{2}$. So $cl_{1}(y\,\cup\,x$ is also open with respect to $cl_{2}$ and the two sides of the equality must be equal. Similarly for the reversed argument.}
\item {If $x\,\vee\,(y\,\vee\,z)$ and $(x\,\vee\,y)\,\vee\,z$ are defined, then they must equal $cl_{1}(x\,\cup\,cl_{1}(y\,\cup\,z))$ and $cl_{1}(cl_{1}(x\,\cup\,y))\,\cup\,z)$ respectively. Further these and $cl_{1}(x\,\cup\,y)$, and $cl_{1}(y\,\cup\,z)$ must be open with respect to $cl_{2}$. But $cl_{i}$ are topological closures. So $x\,\vee\,(y\,\vee\,z)\stackrel{\omega}{=}\,(x\,\vee\,y)\,\vee\,z$}
\item {$(\xi (x)\,\longrightarrow\,x\,\vee\,x\,=\,cl_{1}(x))$ can be derived directly.}
\item {If $(x\,\vee\,y\,=\,z$ then $z\,=\,cl_{1}(x\,\cup\,y)$ and it must be open with respect to the second closure system. So, 
$(x\,\vee\,y\,=\,z\,\longrightarrow\,cl_{1}z\,=\,z,\,cl_{2} z^{c}\,=\,z^{c})$}
\item {If $x\,\vee\,x\,=\,y$, then $x\,\wedge\,x$ will also be equal to $cl_{1}(x)$. The rest of $(x\,\vee\,x\,=\,y\,\longrightarrow\,cl_{2}(x^{c})\,=\,x^{c},\,y\,=\,cl_{1}(x)\,=\,x\,\wedge\,x)$ follows from the previous observation.}
\item {$cl_{i}(x)\,\cap\,x\,=\,x $;  $cl_{i} cl_{i} (x)\,=\,cl_{i} (x)$;  $i\,=\,1,\,2$ follows from definition}
\item {$(x\,\cap\,y\,=\,x\,\longrightarrow\,cl_{i}(x)\,\cap\,cl_{i}(y)\,=\,cl_{i}(x))$;  $i\,=\,1,\,2$ expresses monotonicity}
\item {The proof of $x\,\wedge\,y\,\stackrel{\omega^{*}}{=}\,y\,\wedge\,x$ is similar to that of its dual.}
\item {The proof of $x\,\wedge\,(y\,\wedge\,z)\stackrel{\omega}{=}\,(x\,\wedge\,y)\,\vee\,z$ is similar but easier than that of its dual.}
\item {$(x\,\wedge\,x\,=\,y\,\longrightarrow\,cl_{2}(x^{c})\,=\,x^{c},\,y\,=\,cl_{1}(x))$ follows directly from definition.}
\item {$(cl_{2}(cl_{1}x)^{c})\,=\,(cl_{1}x)^{c}\,\longrightarrow\,x\,\wedge\,x\,=\,cl_{1}(x))$ is also direct.}
\item {If $x\,\wedge\,y\,=\,z$, then $z$ must necessarily be closed with respect to $cl_{1}$ and open with respect to $cl_{2}$. So $(x\,\wedge\,y\,=\,z\,\longrightarrow\,cl_{1}z\,=\,z,\,cl_{2} z^{c}\,=\,z^{c})$}
\item {$(x\,\wedge\,y)\,=\,a$ (say) is certainly lesser than $cl_{1}(x)$, so  $(x\,\wedge\,y)\,\vee\,x\,=\,cl_{1}(a\,\cup\,x)\,=\,cl_{1}(x)$. This proves $((x\,\wedge\,y)\,\vee\,x\,=\,z\,\longrightarrow\,z\,=\,cl_{1}(x))$}
\item {The argument of the previous conditional implication can be extended to prove $(x\,\wedge\,y)\,\vee\,x\,\stackrel{\omega^{*}}{=}\,x\,\wedge\,(y\,\vee\,x) $.}
\item {$\ml \bot\,=\,\bot$, and  $\ml 1\,=\,1$ follow from definition.}
\item {If $(cl_{1}x\,=\,x,\,cl_{2}(x^{c})\,=\,x^{c}$, then $x$ is essentially in the main quotient structure of interest. So $\ml x$ will be defined and the rest of $(cl_{1}x\,=\,x,\,cl_{2}(x^{c})\,=\,x^{c}\,\longrightarrow\,x\,\vee\,\ml x\,=\,x,\,\ml \ml x\,=\,\ml x)$ follows.}
\item {If $(x\,\sqcap\,y\,=\,z$, then $x,\,y$ are essentially in $C_{1}O_{2}(K^{*},\,C_{1},\,C_{2}$, but then $x\,\sqcap\,y$ must be the infimum of $x$ and $y$ with respect to $\leq$. So $z$ must also be in $C_{1}O_{2}(K^{*},\,C_{1},\,C_{2}$ and $(x\,\sqcap\,y\,=\,z\,\longrightarrow\,\xi(x,\,y,\,z))$.}
\item {The proof of $ (x\,\sqcup\,y\,=\,z\,\longrightarrow\,\xi(x,\,y,\,z))$ is similar to that of the above statement.}
\item {If either $x\,\sqcap\,y$ or $y\,\sqcap\,x$ is defined, then the other is and the two must be equal to their identification with $i(x)\,\Cap\,i(y)$. So $x\,\sqcap\,y\,\stackrel{\omega^{*}}{=}\,y\,\sqcap\,x$.}
\item {$x\,\sqcap\,(y\,\sqcap\,z)\stackrel{\omega}{=}\,(x\,\sqcap\,y)\,\sqcap\,z$}
\item {$x\,\sqcup\,y\,\stackrel{\omega^{*}}{=}\,y\,\sqcup\,x$ can be proved in the same way as its dual statement (with $\sqcap$).}
\item {The rest of the proof follows from the first theorem of this section.}
\end{enumerate}
\end{proof}

We have defined a concrete bitten algebra in such a way that abstraction becomes easy.

\begin{definition}
By an \emph{abstract bitten algebra}, we shall mean a partial algebra of the form \\ $A\,=\,\left\langle\underline{A},\,\vee,\,\wedge,\,\sqcap,\,\sqcup,\,\cup,\,\cap,\,^{c},\,,cl_{1},\,cl_{2},\,\sim,\,\mathfrak{L},\,\diamondsuit,\,\bot,\,1,\,\top \right\rangle $ that satisfies all of the following:
\begin{enumerate}
\item {$\left\langle\underline{A},\,\cup,\,\cap,\,^{c},\,\bot,\,\top \right\rangle $ is a Boolean algebra.}
\item {$x\,\vee\,y\,\stackrel{\omega}{=}\,y\,\vee\,x$}
\item {$x\,\vee\,(y\,\vee\,z)\stackrel{\omega}{=}\,(x\,\vee\,y)\,\vee\,z$}
\item {$(cl_{2} (cl_{1}x)^{c})\,=\,(cl_{1}x)^{c}\,\longrightarrow\,x\,\vee\,x\,=\,cl_{1}(x))$ }
\item {$(x\,\vee\,y\,=\,z\,\longrightarrow\,cl_{1}z\,=\,z,\,cl_{2} z^{c}\,=\,z^{c})$}
\item {$(x\,\vee\,x\,=\,y\,\longrightarrow\,cl_{2}(x^{c})\,=\,x^{c},\,y\,=\,cl_{1}(x)\,=\,x\,\wedge\,x)$}
\item {$cl_{i}(x)\,\cap\,x\,=\,x $;  $cl_{i} cl_{i} (x)\,=\,cl_{i} (x)$;  $i\,=\,1,\,2$}
\item {$(x\,\cap\,y\,=\,x\,\longrightarrow\,cl_{i}(x)\,\cap\,cl_{i}(y)\,=\,cl_{i}(x))$;  $i\,=\,1,\,2$}
\item {$x\,\wedge\,y\,\stackrel{\omega}{=}\,y\,\wedge\,x$}
\item {$x\,\wedge\,(y\,\wedge\,z)\stackrel{\omega}{=}\,(x\,\wedge\,y)\,\vee\,z$}
\item {$(x\,\wedge\,x\,=\,y\,\longrightarrow\,cl_{2}(x^{c})\,=\,x^{c},\,y\,=\,cl_{1}(x))$}
\item {$(cl_{2}((cl_{1}x) ^{c})\,=\,(cl_{1}x)^{c}\,\longrightarrow\,x\,\wedge\,x\,=\,cl_{1}(x))$ }
\item {$(x\,\wedge\,y\,=\,z\,\longrightarrow\,cl_{1}z\,=\,z,\,cl_{2} z^{c}\,=\,z^{c})$}
\item {$((x\,\wedge\,y)\,\vee\,x\,=\,z\,\longrightarrow\,z\,=\,cl_{1}(x))$}
\item {$(x\,\wedge\,y)\,\vee\,x\,\stackrel{\omega}{=}\,x\,\wedge\,(y\,\vee\,x) $}
\item {$\ml \bot\,=\,\bot$;  $\ml 1\,=\,1$}
\item {$(cl_{1}x\,=\,x,\,cl_{2}(x^{c})\,=\,x^{c}\,\longrightarrow\,x\,\vee\,\ml x\,=\,x,\,\ml \ml x\,=\,\ml x)$}
\item {$(x\,\sqcap\,y\,=\,z\,\longrightarrow\,\xi(x,\,y,\,z))$}
\item {$ (x\,\sqcup\,y\,=\,z\,\longrightarrow\,\xi(x,\,y,\,z))$}
\item {$x\,\sqcap\,y\,\stackrel{\omega}{=}\,y\,\sqcap\,x$}
\item {$x\,\sqcap\,(y\,\sqcap\,z)\stackrel{\omega}{=}\,(x\,\sqcap\,y)\,\sqcap\,z$}
\item {$x\,\sqcup\,y\,\stackrel{\omega}{=}\,y\,\sqcup\,x$}
\item {$x\,\sqcup\,(y\,\sqcup\,z)\stackrel{\omega}{=}\,(x\,\sqcup\,y)\,\sqcup\,z$}
\item {$(\xi(x,\,y)\,\longrightarrow\,\diamondsuit (x\,\sqcup\,y)\,\cap\,(\diamondsuit x\,\sqcup\,\diamondsuit y)\,=\,\diamondsuit x\,\sqcup\,\diamondsuit y)$}
\item {$(\xi(x,\,y)\,\longrightarrow\,\diamondsuit (x\,\sqcap\,y)\,\cap\,(\diamondsuit x\,\sqcap\,\diamondsuit y)\,=\,\diamondsuit (x\,\sqcup\, y))$}
\item {$(\xi(x,\,y)\,\longrightarrow\,\ml (x\,\sqcup\,y)\,\cap\,(\ml x\,\sqcup\,\ml y)\,=\,\ml x\,\sqcup\,\ml y)$}
\item {$(\xi (x)\,\longrightarrow\,\ml x\,\sqcap\,\diamondsuit \ml x\,=\,\ml x)$}
\item {$(\xi (x)\,\longrightarrow\,\diamondsuit x\,\sqcup\,\ml \diamondsuit  x\,=\,\diamondsuit x)$}
\item {$(\xi x\,\longrightarrow\,\sim \diamondsuit x\,=\,\ml \sim x)$}
\item {$(\xi x\,\longrightarrow\,\sim \ml x\,=\,\diamondsuit \sim x)$}
\end{enumerate}
\end{definition}

\begin{definition}
Let $\tau$ be a collection of subsets of $K$ indexed by $K$, that satisfies
\begin{enumerate}
\item {$(\forall x \in K)(\exists \!y\,\in\,\tau)\,x\,\in\,y$}
\item {$\bigcup\,\tau\,=\,K$}
\item {$\tau$ is an antichain with respect to inclusion}
\item {For a not necessarily disjoint partition $\mathcal{P}$ of $K$,  $\{\cup_{x\,\in\,A}\{H_{x}:\,H_{x}\,\in\,\tau\}\}_{A\,\in\,\mathcal{P}}\}\,=\,\mathcal{B}$ satisfies:
\begin{itemize}
\item {$\mathcal{B}$ is an antichain with respect to the usual inclusion order.}
\item {If $A$ is a subset of $K$ not included in any element of $\mathcal{B}$, then there exists a two element subset of $A$ with the same property.}
\end{itemize}}

Then $\tau$ will be called an \emph{ortho-normal cover} of $K$ 
\end{enumerate}
\end{definition}

\begin{definition}
Let $\tau$ be a collection of subsets of an algebra $K\,=\,\left\langle\underline{K},\,f_{1},\,f_{2},\,\ldots \,f_{l} \right\rangle $ indexed by $K$, that satisfies
\begin{enumerate}
\item {$(\forall x \in K)(\exists \!y\,\in\,\tau)\,x\,\in\,y$}
\item {$\bigcup\,\tau\,=\,K$}
\item {$\tau$ is an antichain with respect to inclusion}
\item {For a not necessarily disjoint partition $\mathcal{P}$ of $K$,  $\{\cup_{x\,\in\,A}\{H_{x}:\,H_{x}\,\in\,\tau\}\}_{A\,\in\,\mathcal{P}}\}\,=\,\mathcal{B}$ satisfies:
\begin{itemize}
\item {$\mathcal{B}$ is an antichain with respect to the usual inclusion order.}
\item {If $A$ is a subset of $K$ not included in any element of $\mathcal{B}$, then there exists a two element subset of $A$ with the same property.}
\item {For any $f_{i}$ of arity $n$ and elements $B_{1},\ldots,B_{n}\,\in\,\mathcal{B}$ there exists an element $B\,\in\,\mathcal{B}$ such that $f(B_{1},\ldots,\,B_{n})\,\subseteq\,B$  }
\end{itemize}}

Then $\tau$ will be called an \emph{ortho-normal cover} of the algebra $K$ for the tolerance determined by $\mathcal{B}$. 
\end{enumerate}
\end{definition}

Note that $\mathcal{B}$ is a normal system of subsets of the algebra $K$ and therefore determines a unique compatible tolerance on $K$ (see \cite{CZ}) and conversely. The same thing happens in case of the first definition for the set $K$.  

\begin{definition}
Let the set of minimal elements in $\{\diamondsuit x\,:\,\xi x \,:\ml x\,\neq\,0\}$ of an abstract bitten algebra $S$  be $H_{0}$, then let $H\,=\,\{x\,:\,\diamondsuit x\,\in\,H_{0}\}$. If $H$ determines an ortho-normal cover on a TAS $P\,=\,\left\langle \underline{P},\,T\right\rangle$ with $Card (\underline{P})\,=\,Card(H)$ then $S$ will be said to be a \emph{refined abstract bitten algebra}. 
\end{definition}

\begin{theorem}\label{thm6}
For each refined abstract bitten algebra $S$ there exists a tolerance approximation space $K$, such that $Bite(K)\,\cong \,S$.
\end{theorem}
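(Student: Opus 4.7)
The plan is to reverse-engineer the tolerance approximation space from the algebraic data already singled out by the refinement hypothesis. Given a refined abstract bitten algebra $S$, its definition supplies a TAS $P\,=\,\left\langle\underline{P},\,T\right\rangle$ together with an ortho-normal cover $H$ satisfying $Card(\underline{P})\,=\,Card(H)$; the associated normal system $\mathcal{B}$ determines (by the result of \cite{CZ}) a unique compatible tolerance on $\underline{P}$. I would set $K\,=\,P$ and aim to show $Bite(K)\,\cong\,S$.

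The first step is to transfer the Boolean skeleton. The reduct $\left\langle\underline{S},\,\cup,\,\cap,\,^{c},\,\bot,\,\top\right\rangle$ is a Boolean algebra by the first axiom of the abstract algebra, so Stone duality yields a set $Y$ with $\underline{S}\,\cong\,\wp(Y)$. Using the closure axioms (the seventh and eighth), $cl_{1}$ and $cl_{2}$ can be interpreted as topological closures on $Y$, and the elements satisfying $\xi$ become precisely the $C_{1}O_{2}$-sets of this system. The representation theorem from \cite{BZ} cited above then identifies this $\xi$-fragment with $C_{1}O_{2}(K^{*},\,C_{1},\,C_{2})$, once $Y$ is taken to be $K^{*}$ for the TAS $K$ just constructed. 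The bijection $H\,\leftrightarrow\,\underline{P}$ supplied by the refinement condition pins down this identification on generators: each minimal element of $\{\diamondsuit x\,:\,\xi x,\,\ml x\,\neq\,0\}$ is matched with the class of a singleton neighbourhood in $K$, so that $\diamondsuit$ on generators realises $\blacklozenge$ on the minimal non-empty bitten upper approximations.

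Next I would transfer the operations. The equations defining $Bite(K)$ force $\vee$ and $\wedge$ to be the $cl_{1}$-closure of union or intersection whenever the result is also $cl_{2}$-open, while axioms (2)-(15) of the abstract algebra impose exactly the same behaviour in $S$, so these partial operations correspond under the bijection. The operations $\ml,\,\diamondsuit,\,\sim$ restricted to the $\xi$-fragment are determined by $L,\,\blacklozenge,\,\neg$ on $\wp(\underline{K})|\sim$ through the canonical map $i$, and axioms (17) and (24)-(30) guarantee the corresponding identities hold in $S$. Finally $\sqcap$ and $\sqcup$ are pinned down by being the infimum and supremum in the $\xi$-fragment (axioms (18)-(23) together with the antisymmetry supplied by the final concrete axiom), so they match as well. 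Collecting these identifications yields the isomorphism $Bite(K)\,\cong\,S$.

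The main obstacle is the coherence of the above identifications: showing that the bijection $H\,\leftrightarrow\,\underline{P}$ really does extend to an isomorphism of the entire $\xi$-fragment of $S$ with $C_{1}O_{2}(K^{*},\,C_{1},\,C_{2})$ built from the recovered TAS. This has two subtleties. First, the tolerance on $\underline{P}$ recovered from $\mathcal{B}$ via \cite{CZ} must yield granules whose bitten upper and lower approximations reproduce exactly the rough classes encoded in the minimal diamonds of $S$; second, the abstract closures $cl_{1},\,cl_{2}$ must coincide with the topological closures $C_{1},\,C_{2}$ arising from $K^{*}$. The ortho-normal cover axioms, and in particular the two-element obstruction clause, do the crucial work here: they force the partition of $\underline{P}$ into rough classes to be read off from the cover in a canonical way, which is what legitimises the appeal to the representation theorem of \cite{BZ} and closes the loop.
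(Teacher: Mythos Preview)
Your proposal is correct and follows essentially the same route as the paper: extract the TAS $P$ from the refinement hypothesis, invoke the tolerance representation result of \cite{CZ}, and appeal to the earlier $C_{1}O_{2}$ representation (from \cite{BZ}) to match up the remaining structure. The paper's own proof is a three-line sketch pointing back at these same ingredients (``the proof has already been done above''), so your write-up is in fact a more detailed unpacking of exactly what the paper leaves implicit---in particular, you correctly identify the coherence issue that the paper sweeps under ``rest of the aspects have already been taken care of.''
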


\begin{proof}
The proof has already been done above. The essential steps are:
\begin{enumerate}
\item {The definition of a refined abstract bitten algebra $S$, ensures the existence of a related TAS $P$ (say).}
\item {It can be checked that the TAS $P$ and the TAS $K$ (mentioned in the statement of the theorem) are isomorphic because of the representation theorem for tolerances.}
\item {Rest of the aspects have already been taken care of.}
\end{enumerate}
\end{proof}

Theorem~\ref{thm6} is hardly constructive in any sense of the term and may prove to be difficult to apply in particular situations.

\subsection{Discussion}

In the above, a semantics for the logic of roughly similar objects is developed and it has a certain relationship with the original TAS. However the actual level of relationship that is desired between \emph{such a semantics}, and an \emph{original generalized approximation space along with the associated process} is still the subject to some judgement. This is definitely independent of logic-forming strategies like the Gentzen style algebraic (\cite{CzJ}, \cite{CzP}) or abstract algebraic approach that can be applied.

Any bitten semantics with no restrictions on the type of granules can be expected to fall short of a unique representation theorem (in the sense that given the semantics, we have a specification for obtaining the original TAS in a unique way). We say this because in general, the process of forming approximations actually obscures the distribution of blocks. The latter is essential for a unique representation theorem because of $\cite{CZ}$.  

When the set of granules used is the set of $T$-related elements, the required conditions for a unique representation theorem will necessarily include a constructive instance of the following process of formation of blocks from sets of $T$-related elements.

\begin{itemize}
\item {Let $\mathcal{B}$ denote the set of all blocks of the TAS $S\,=\,\left\langle \underline{S},\,T\right\rangle$ and let $\tau\,=\,\{[x]_{T}\,:\,x\,\in,\underline{S}\}$. }
\item {Form the power set $\wp (\tau)$}
\item {Let $\mu (\tau)\,=\,\{\cup (K)\,:\,K\,\in\,\wp (\tau),\,T_{|\cup (K)}\;\mathrm{is}\;\mathrm{an}\;\mathrm{equivalence}\}$. $T_{|\cup (K)}$ being the restriction of the tolerance to the set $\cup (K)$.}
\item {$\mu (\tau)$ is partially ordered by the inclusion relation.}
\item {$\mu _{max}(\tau)$, the set of maximal elements of $\mu (\tau)$, is the set of blocks of $S$. That is, $\mu _{max}(\tau)\,=\,\mathcal{B}$.}
\end{itemize}

\subsection{Illustrative Example}

Let $S\,=\,\{x_{1}, x_{2}, x_{3}, x_{4}\}$ and let the tolerance $T$ be generated on it by $\{(x_{1}, x_{2}),\,(x_{2},x_{3})\}$. Taking the granules to be the set of $T$-related elements, we have $Gr(S)=\{(x_{1}:x_{2}),\,(x_{2}:x_{1},x_{3}),\,(x_{3}:x_{2}),\,(x_{4}:)\}$. Here $(x_{1}:x_{2})$ means the granule generated by $x_{1}$ is $(x_{1},\,x_{2})$. The different approximations are then as in the table below. 

The first column in the table is for keeping track of the elements in the quotient
$\wp (S) |\sim $ and can be used for checking the operations of Prop.~\ref{sec3}. The order structure is given by the Hasse diagram following the table. More details of the construction are omitted because the next step will take some space.

\begin{center}
\begin{tabular}{|c|c|c|c|c|c|c|}
\hline\hline
 $\mathbf{\wp (S) |\sim}$  &  \textbf{Subset} & $\mathbf{X}$ & $\mathbf{Gr_{*}(X)}$ & $\mathbf{Gr^{*}(X)}$ & $\mathbf{Gr_{*}(X^{c})}$ & $\mathbf{Gr_{b}^{*}(X)}$ \\
\hline 
$B_{1}$ & $A_{1}$ & $\{x_{1}\}$ & $\emptyset$ & $\{x_{2},\,x_{1}\}$ & $\{x_{2},x_{3}, x_{4}\}$ & $\{x_{1}\}$ \\
\hline
$B_{2}$ & $A_{2}$ & $\{x_{2}\}$ & $\emptyset$ & $\{x_{1}, x_{2}, x_{3}\}$ & $\{x_{4}\}$ & $\{x_{1}, x_{2}, x_{3}\}$ \\
\hline
$B_{3}$ & $A_{3}$ & $\{x_{3}\}$ & $\emptyset$ & $\{x_{1}, x_{2}, x_{3}\}$ & $\{x_{1}, x_{2}, x_{4}\}$ & $\{x_{3}\}$ \\
\hline
$B_{4}$ & $A_{4}$ & $\{x_{4}\}$ & $\{x_{4}\}$ & $\{x_{4}\}$ & $\{x_{1}, x_{2}, x_{3}\}$ & $\{x_{4}\}$ \\
\hline
$B_{5}$ & $A_{5}$ & $\{x_{1}, x_{2}\}$ & $\{x_{1}, x_{2}\}$ & $\{x_{1}, x_{2}, x_{3}\}$ & $\{x_{4}\}$ & $\{x_{1}, x_{2}, x_{3}\}$ \\
\hline

$B_{2}$ & $A_{6}$ & $\{x_{1}, x_{3}\}$ & $\emptyset$ & $\{x_{1}, x_{2}, x_{3}\}$ & $\{x_{4}\}$ & $\{x_{1}, x_{2}, x_{3}\}$ \\
\hline
$B_{6}$ & $A_{7}$ & $\{x_{1}, x_{4}\}$ & $\{x_{4}\}$ & $S$ & $\{x_{2}, x_{3}\}$ & $\{x_{1}, x_{4}\}$ \\
\hline
$B_{7}$ & $A_{8}$ & $\{x_{2}, x_{3}\}$ & $\{x_{2}, x_{3}\}$ & $\{x_{1}, x_{2}, x_{3}\}$ & $\{x_{4}\}$ & $\{x_{1}, x_{2}, x_{3}\}$ \\
\hline
$B_{8}$ & $A_{9}$ & $\{x_{2}, x_{4}\}$ & $\{x_{4}\}$ & $S$ & $\emptyset$ & $S$ \\
\hline
$B_{9}$ & $A_{10}$ & $\{x_{3}, x_{4}\}$ & $\{x_{4}\}$ & $S$ & $\{x_{1}, x_{2}\}$ & $\{x_{3}, x_{4}\}$ \\
\hline

$B_{10}$ & $A_{11}$ & $\{x_{1}, x_{2}, x_{3}\}$ & $\{x_{1}, x_{2}, x_{3}\}$ & $\{x_{1}, x_{2}, x_{3}\}$ & $\{x_{4}\}$ & $\{x_{1}, x_{2}, x_{3}\}$ \\
\hline
$B_{11}$ & $A_{12}$ & $\{x_{1}, x_{2}, x_{4}\}$ & $\{x_{1}, x_{2}, x_{4}\}$ & $S$ & $\emptyset$ & $S$ \\
\hline
$B_{12}$ & $A_{13}$ & $\{x_{2}, x_{3}, x_{4}\}$ & $\{x_{2}, x_{3}, x_{4}\}$ & $S$ & $\emptyset$ & $S$ \\
\hline
$B_{8}$ & $A_{14}$ & $\{x_{1}, x_{3}, x_{4}\}$ & $\{x_{4}\}$ & $S$ & $\emptyset$ & $S$ \\
\hline
$B_{14}$ & $A_{15}$ & $S$ & $S$ & $S$ & $\emptyset$ & $S$ \\
\hline

$B_{13}$ & $A_{16}$ & $\emptyset$ & $\emptyset$ & $\emptyset$ & $S$ & $\emptyset$ \\
\hline
\end{tabular}
\end{center}

\includegraphics{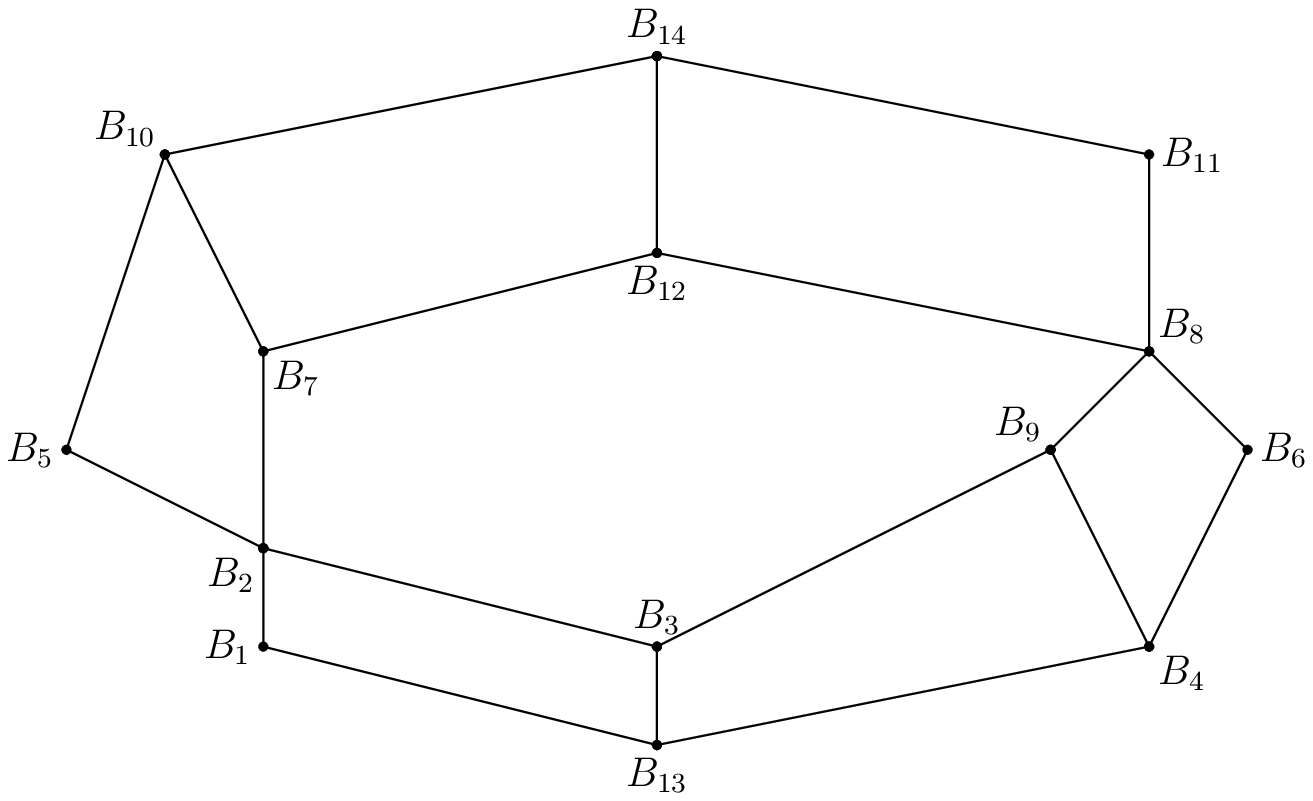}
\begin{center}
\textbf{Fig.1: Partial Order on the Quotient Partial Algebra} 
\end{center}

\hrulefill

\subsection*{Problems}
Some important problems that originate from the previous sections are:
\begin{enumerate}
\item {Under what general conditions will the operation of biting make the partial operation in Prop. ~\ref{sec3} total?}
\item {Find simpler conditions under which an abstract bitten algebra becomes a refined abstract bitten algebra.}
\item {Describe the quasi equational classes (and variants) of refined abstract bitten algebras }
\item {Does a complete, atomic double Heyting algebra determine a unique BAS? }
\item {Which type of can reducts be computed with the help of these algebras?}
\end{enumerate}

\section{Alternative Approach}

The appropriate semantic domain for the above semantics may be considered by some to be less natural on subjective grounds. Among these, the difficulty involved in reasoning within the power set of the set of possible order-compatible partitions of the set of roughly equivalent elements may be cited. We introduce a simpler semantics for these reasons, and also because of possible connections with the semantics for the choice inclusive similarity based rough set theory due to the present author in \cite{AM69} (we consider extensions of the same in two forthcoming papers). 

There are many differences between the use of choice functions in \cite{AM69} and in the present paper. In the former, the use of blocks as granules permit an elegant interpretation in the context of \emph{local clear discernibility} and generalizations of Pawlak's theory of knowledge. This paradigm needs a complete reworking in the present context as blocks do not always relate nicely to the concept of granules used. Moreover choice is used in the definition of approximations. So we avoid doing these in the present paper and restrict ourselves to developing a semantics with less commitment to intended meaning. Choice functions are used in defining the rough operations of \emph{combining sets} and \emph{extracting the common part of two sets}. 

\begin{definition}
For any $a,\,b\,\in\,\wp(S)|\sim$, let $UB(a,\,b)$ and $LB(a,\,b)$ be the set of minimal upper bounds and the set of maximal lower bounds of $a$ and $b$ (we assume that these are nonempty for all pairs $a,\,b$). If $\lambda :\,\wp(\wp (S)|\sim )\,\longmapsto\,\wp (S)|\sim$ is a choice function, (by definition, it is such that $(a\,\leq\,b\,\longrightarrow\,\lambda(UB(\{a,\,b\}))\,=\,b,\,\lambda(LB(\{a,\,b\}))\,=\,a)$), then let 
\begin{itemize}
 \item {$a\,+\,b\,=\,\lambda(UB(\{a,\,b\}))$}
 \item {$a\,\cdot\,b\,=\,\lambda(LB(\{a,\,b\}))$.}
\end{itemize}

$\mathfrak{S}\,=\,\left\langle \wp (S)|\sim,\,+\,\cdot,\,L,\,\blacklozenge,\,\neg\right\rangle $ will be called the \emph{simplified algebra of the bitten granular semantics} (\textsf{SGBA})
\end{definition}

\begin{theorem} A \textsf{SGBA}, $\mathfrak{B}\,=\,\left\langle \underline{B},\,+\,\cdot,\,L,\,\blacklozenge,\,\neg\right\rangle $ satisfies all of the following: 
\begin{enumerate}
\item {$\left\langle \underline{B},\,+\,\cdot \right\rangle$ is a $\lambda$-lattice}
\item {$a\,+\,b\,=\,b\,+\,a$;   $\;a\,\cdot\,b\,=\,b\,\cdot\,a$}
\item {$a\,+\,a\,=\,a $;   $\;a\,\cdot\,a\,=\,a$}
\item {$a\,+\,(a\,\cdot\,b)\,=\,a $;   $\;a\,\cdot\,(a\,+\,b)\,=\,a $}
\item {$a\,+\,(a\,+\,(b\,+\,c))\,=\,a\,+\,(b\,+\,c) $;   $\;a\,\cdot\,(a\,\cdot\,(b\,\cdot\,c))\,=\,a\,\cdot\,(b\,\cdot\,c) $}
\item {$a\,+\,La\,=\,a$;   $\;a\,\cdot\,La\,=\,La $}
\item {$a\,+\,\blacklozenge a\,=\,\blacklozenge a $;   $\; a\,\cdot\,\blacklozenge a\,=\,a $}
\item {$L(La)\,=\,La $;   $\; \blacklozenge(\blacklozenge a)\,=\,\blacklozenge a $}
\item {$(a\,+\,b\,=\,a\,\longrightarrow\,La\,+\,Lb\,=\,La) $;   $\; (a\,\cdot\,b\,=\,a\,\longrightarrow\,La\,\cdot\,Lb\,=\,La) $}
\item {$(a\,+\,b\,=\,a\,\longrightarrow\,\blacklozenge a\,+\,\blacklozenge b\,=\,\blacklozenge a) $;   $\; (a\,\cdot\,b\,=\,a\,\longrightarrow\,\blacklozenge a\,\cdot\,\blacklozenge b\,=\,\blacklozenge a) $}
\item {$\neg(La)\,=\,\blacklozenge (\neg a) $;   $\; \neg (\blacklozenge a)\,=\,L(\neg a) $}
\item {$L0\,=\,0,\;L1\,=\,1 $;   $\; \blacklozenge 0\,=\,0,\; \blacklozenge 1\,=\,1$}
\item {$L \blacklozenge a \,+\,\blacklozenge a\,=\,\blacklozenge a $;   $\; L \blacklozenge a \,\cdot\,\blacklozenge a\,=\,L \blacklozenge a$}
\item {$L a\,+\,\blacklozenge L a\,=\,\blacklozenge L a $;   $\; L a\,\cdot\,\blacklozenge L a\,=\, L a$}
\end{enumerate}
\end{theorem}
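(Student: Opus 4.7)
The strategy is to reduce each of the fourteen items to one of three kinds of facts already at our disposal: (i) the general arithmetic of $\lambda$-lattices established in the earlier theorem on $\lambda$-lattices; (ii) the order-theoretic and monotonicity statements about $L$ and $\blacklozenge$ obtained in Proposition~\ref{sec3} and the subsequent theorem on $\wp(S)|\sim$; (iii) the basic identities on $Gr_{*}$ and $Gr^{*}_{b}$ tabulated in the bitten background (entries $3a$-$5a$, $3b$-$5b$, $8a$, $8b$, $9a$, $9b$). The key translation is that, by lattice-coherence of $\lambda$, the order $\leq$ on $\wp(S)|\sim$ is reconstructed from $+,\cdot$ via $a\leq b\iff a+b=b\iff a\cdot b=a$, so every inequality proved previously can be rewritten as an equation in the signature $(+,\cdot)$.

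First I would handle items $1$-$5$ in a single block. Item $1$ is immediate: $\lambda$ is a choice function on $UB$ and $LB$ and is lattice-coherent by hypothesis, so the construction of the earlier subsection endows $\underline{B}$ with a $\lambda$-lattice structure. Items $2$-$5$ are then direct transcriptions of the six identities listed in the $\lambda$-lattice theorem (commutativity of $+,\cdot$; idempotency; two absorption laws; the weak associativity laws $a+((a+b)+c)=(a+b)+c$ and dually). I would simply quote that theorem.

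Next, for items $6$-$8$, I would use the chain $La\leq a\leq \blacklozenge a$ (which restates $Gr_{*}(A)\subseteq A\subseteq Gr^{*}_{b}(A)$, properties $1a$ and $1b$) together with idempotency of $L$ and $\blacklozenge$ (properties $5a$ and $5b$, already reported in Proposition~\ref{sec3}). Under the order-to-operation dictionary, $La\leq a$ is exactly $a+La=a$ and $a\cdot La=La$; similarly for $a\leq \blacklozenge a$. Items $9$-$10$ are the monotonicity of $L$ and $\blacklozenge$ (properties $2a$, $2b$); the hypothesis $a+b=a$ means $b\leq a$, whence $Lb\leq La$, i.e.\ $La+Lb=La$, and analogously for $\cdot$ and for $\blacklozenge$. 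Item $11$ is literally a restatement of the identities $\neg\blacklozenge x=L\neg x$ and $\neg Lx=\blacklozenge\neg x$ proved in the preceding theorem on $\wp(S)|\sim$. Item $12$ is obtained from $Gr_{*}(\emptyset)=\emptyset=Gr^{*}_{b}(\emptyset)$ and $Gr_{*}(S)=S=Gr^{*}_{b}(S)$ (properties $3a$-$4b$), and items $13$-$14$ follow from the containments $Gr_{*}Gr^{*}_{b}(X)\subseteq Gr^{*}_{b}(X)$ (property $8b$) and $Gr_{*}(X)\subseteq Gr^{*}_{b}Gr_{*}(X)$ (property $8a$), together with $La\leq a$ and $a\leq\blacklozenge a$ applied to $\blacklozenge a$ and $La$ respectively.

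The main obstacle, in fact the only delicate point, is to be sure that the dictionary ``inequality $\leftrightarrow$ lattice equation'' is applied only where both sides are defined in the original partial algebra of Proposition~\ref{sec3}. Because we assumed a priori that $UB(a,b)$ and $LB(a,b)$ are nonempty for every pair, every expression $a+b$ and $a\cdot b$ formed via $\lambda$ is a genuine (total) operation on $\underline{B}$, even where the original $\Cap,\Cup$ of Proposition~\ref{sec3} were undefined; this is precisely the simplification gained by introducing the choice function $\lambda$. Consequently no weak-equality hedges are needed and each of the fourteen items is a total identity or implication, as stated.
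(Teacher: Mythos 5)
Your proposal is correct and follows the same route the paper intends: the paper's own proof is only the remark that ``the proof consists in verification,'' and your sketch carries out exactly that verification, reducing items 1--5 to the $\lambda$-lattice identities and items 6--14 to the order/monotonicity facts about $L$ and $\blacklozenge$ together with the tabulated properties of $Gr_{*}$ and $Gr^{*}_{b}$, via the dictionary $a\leq b\iff a+b=b\iff a\cdot b=a$ supplied by lattice-coherence of $\lambda$. Your closing observation that totality of $+$ and $\cdot$ (from the assumed nonemptiness of $UB$ and $LB$) removes any need for weak-equality hedges is also the right justification for the identities being stated unconditionally.
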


\begin{proof}
The proof consists in verification and is not very hard. 
\end{proof}

\subsection{Representation Problem for SGBAs}

Given a TAS $S$ in a bitten rough semantic perspective, associating a single \textsf{SGBA} as its corresponding semantics amounts to modifying the original meaning by the introduction of artificial choice functions for the purpose of forming rough union and intersection-like operations. Either we need a justification of such preference or accept all of the possible preferences. So the default semantics must be given by a set of \textsf{SGBA}s indexed by the set of all possible choice functions in the lambda lattice formation context. In this perspective the semantics can be explained directly and a sequent calculus associated (and with little additional representation theory).

Let $x\,\in\,S$, then $([x]_{T})^{l}\,=\,[x]_{T} $, while $([x]_{T})^{u}_{b}\,=\,([x]_{T})^{u}\,\setminus\,(S\,\setminus\,[x]_{T})^{l})$. It is obvious that elements with nonempty lower approximation that are minimal with respect to the rough order will be equivalent to elements of this type. Once the order relation on the set of roughly equivalent elements has been deduced, then we can find the elements of this type. This permits the reconstruction of the equivalence based partition of  the power set of $S$. 

If we do not know all the choice functions involved, then it is not possible in general to determine or construct the blocks of the tolerance $T$. But when will a knowledge of given subsets of choice functions permit us to determine the blocks of $T$? This is the problem of representation of \textsf{SGBA}s. It is also significant in a more general algebraic setting.

\section{Connections with AUAI Approximation Systems}

In a $AUAI$ approximation system $\left\langle S,\,\mathcal{K} \right\rangle $, the collection $\mathcal{K}$ need not be the most appropriate concept of granule for the four different approximations of the theory (this is considered in detail in a forthcoming paper by the present author). The implicit conditions on the possible concepts of a granule in the bitten approach are the following:
\begin{itemize}
\item {The set of granules $\mathcal{S}$ is a partition of $S$, that is $\bigcup \mathcal{S}\,=\,S$.}
\item {The form of the lower and bitten upper approximation are given as in the subsection on 'Bitten Approach'}
\end{itemize}

\begin{theorem}
Given a \textsf{BAS} $\left\langle S,\,Gr(S),\,T,\,Gr_{*},\,Gr_{b}^{*}\right\rangle $, the $AUAI$ approximation system $\left\langle S,\,Gr(S) \right\rangle $ satisfies 
\begin{enumerate}
\item {$(\forall {X}\,\in\,\wp(S))\, X^{l1}\,=\,Gr_{*}(X)$}
\item {$(\forall {X}\,\in\,\wp(S))\, X^{u1}\,\subseteq\,Gr^{*}(X)$}
\item {$(\forall {X}\,\in\,\wp(S))\, Gr_{b}^{*}(X)\,=\,Gr^{*}(X)\,\cap\,X^{u2} $}
\end{enumerate}
\end{theorem}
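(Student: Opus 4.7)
The plan is to unfold both the $AUAI$ definitions and the \textsf{BAS} definitions and then match them directly, leveraging the standing hypothesis $\bigcup Gr(S) = S$ and a De Morgan calculation. All three parts should reduce to short set-theoretic manipulations; no representation-theoretic input is needed.

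For part (1), I would observe that $X^{l1}$ is by definition the union of all $K_i \in Gr(S) \cup \{\emptyset\}$ contained in $X$, while $Gr_*(X)$ is the union of all $A \in Gr(S)$ with $A \subseteq X$. The extra set $K_0 = \emptyset$ contributes nothing to the union, so the two expressions coincide. For part (2), the key observation is that $\bigcup Gr(S) = S$ forces the family $\mathcal{F} = \{K_i \in Gr(S) : K_i \cap X \neq \emptyset\}$ to cover $X$: every $x \in X$ lies in some $K_i \in Gr(S)$, and such a $K_i$ automatically meets $X$. Hence $\bigcup \mathcal{F} = Gr^*(X)$ appears as one of the sets in the family intersected to form $X^{u1}$ (taking $I$ to be the index set of $\mathcal{F}$, adjoining $K_{n+1} = S$ if needed to satisfy the formal constraint), which immediately yields $X^{u1} \subseteq Gr^*(X)$.

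For part (3), I would first rewrite $X^{u2}$ in terms of the negative region. The condition $X \subseteq S \setminus K_i$ is equivalent to $K_i \cap X = \emptyset$, i.e. $K_i \subseteq X^c$. Applying De Morgan therefore gives
\[
X^{u2} \;=\; \bigcap\{S \setminus K_i : K_i \subseteq X^c,\, i \in \{0,\ldots,n\}\} \;=\; S \setminus \bigcup\{K_i : K_i \subseteq X^c\} \;=\; S \setminus Gr_*(X^c) \;=\; S \setminus NEG_{Gr}(X).
\]
Since by definition $Gr_b^*(X) = Gr^*(X) \setminus NEG_{Gr}(X)$, intersecting both sides of the previous identity with $Gr^*(X)$ yields $Gr^*(X) \cap X^{u2} = Gr^*(X) \setminus NEG_{Gr}(X) = Gr_b^*(X)$, as required. (The trivial entry $K_0 = \emptyset$ contributes only $S$ to the intersection and may be discarded.)

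I do not anticipate a serious obstacle: the cover condition $\bigcup Gr(S) = S$ is exactly what makes part (2) work, and the De Morgan step that identifies $X^{u2}$ with the set-complement of $NEG_{Gr}(X)$ is the only mildly non-trivial move in part (3). The one thing to be vigilant about is the bookkeeping of the auxiliary entries $K_0 = \emptyset$ and $K_{n+1} = S$ built into the $AUAI$ definitions; both are absorbed as neutral elements and play no essential role.
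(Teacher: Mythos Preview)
Your proposal is correct and follows essentially the same route as the paper: part (1) is the trivial unfolding of definitions, and part (3) is the same De~Morgan computation identifying $X^{u2}$ with $(Gr_{*}(X^{c}))^{c}$ and then intersecting with $Gr^{*}(X)$. For part (2) your argument is actually more explicit than the paper's: you use the cover hypothesis $\bigcup Gr(S)=S$ to show that $Gr^{*}(X)$ is one of the unions appearing in the intersection defining $X^{u1}$, whereas the paper merely asserts the inclusion (and notes equality in the pairwise-disjoint case) without spelling out why $X\subseteq Gr^{*}(X)$.
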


\begin{proof}
\begin{enumerate}
\item {$X^{l1}\,=\,\bigcup\{A:\,A\,\subseteq\,X\, ;\,A\,\in\,Gr(S) \}\,=\,Gr_{*}(X)$}
\item {$X^{u1}$ is the intersection of all unions of elements of $Gr(S)$, while $Gr^{*}(X)$ is the union of all elements of $Gr(S)$ that have non empty intersection with $X$. In general if $Gr(S)$ is a collection of pairwise disjoint sets then $X^{u1}\,=\,Gr^{*}(X)$, else $X^{u1}\,\subseteq\,Gr^{*}(X)$.}
\item {$Gr_{b}^{*}(X)\,=\,Gr^{*}(X)\,\setminus\, Gr_{*}(X^c)\,=\,Gr^{*}(X)\,\cap\,(Gr_{*}(X^c))^{c}$. But $\bigcap \{A_{i}^{c}:\,X\,\subseteq\,A_{i}^{c}\,;\, A_{i}\in\,Gr(S)\}\,=\,\bigcap \{A_{i}^{c}:\,X^{c}\,\subseteq\,A_{i}\,\in\,Gr(S)\}\,=\,(\bigcup\{A_{i}:\,X^{c}\,\subseteq\,A_{i}\})^{c}\,=\,(Gr_{*}(X^{c}))^{c}\,=\,X^{u2}$. So $Gr_{b}^{*}(X)\,=\,Gr^{*}(X)\,\cap\,X^{u2}$ holds.}
\end{enumerate}
\end{proof}

In the above theorem, we have taken the collection $\mathcal{K}$ of the $AUAI$ approximation system $\left\langle S,\,\mathcal{K} \right\rangle $ to coincide with Gr(S). This need not be the case in general and many variations are possible on the point. In particular we can select the collection $\mathcal{K}$ so that $Gr_{b}^{*}(X)$ coincides with $X^{u2}$.

\section{Concluding Remarks}

In this research paper, we have developed two different algebraic semantics of bitten rough set theory. Topology is involved in a explicit way in the first of two. In the bitten approach, some types of granules can hinder possible seamless representation theorems, while others may be more useful. So the theory is not independent of the type of granules in entirety. The associated logics can be expected to have high expressive power due to the higher order approach used.

A positive solution for the first problem would mean a much simpler algebraic semantics provided we can fix the notion of logical consequence in a suitable way.  
But even in those cases the first semantics would remain relevant.

The nature of the semantic domain used and therefore the effective object level in the first approach is very different and amounts to a new paradigm in rough set theory. In the second approach, though the semantic domain is natural, it is not a fully explored one in the context of rough sets (\cite{AM69}). If we use different types of semantic domain at the object level, then some natural relative distortions may creep in. From the technical point of view this may or may not affect actual applications. A deeper study of such distortions will be considered in future work.

The second approach can be accordant with different interpretations of choice and therefore is a more open ended semantics. Application of this type of semantics requires a more conscious and regulated way of forming approximations or 'specifying the indiscernibles'.

The algebraization strategy developed is relevant for other types of generalized rough set theory. These include generalized cover and pure granule based approaches. For these reasons, we will consider these connections and sequent calculi for the algebraic semantics in a separate paper. 

\bibliographystyle{fundam}
\bibliography{newsem33.bib}
\end{document}